\documentclass[a4paper,english]{article}
\usepackage[a4paper]{geometry}
\usepackage{amsthm,amsmath,amssymb,mathtools}
\usepackage{enumerate,enumitem,csquotes,verbatim}
\usepackage{setspace}
\usepackage[numbers]{natbib}
\usepackage{hyperref}
\usepackage{color}
\usepackage{tikz}

\pagestyle{plain}
\linespread{1.2}
\setlength{\parskip}{\medskipamount}

\theoremstyle{plain}
\newtheorem*{theorem*}{Theorem}
\newtheorem{theorem}{Theorem}
\newtheorem{lemma}[theorem]{Lemma}

\newtheorem*{claim*}{Claim}

\newtheorem{problem}[theorem]{Problem}

\theoremstyle{definition}

\theoremstyle{remark}

\DeclareMathOperator\In{in}

\title{An Isoperimetric Inequality and Pursuit-Evasion Games on Triangular Grid Graphs}

\author{Athipatana Iamphongsai\thanks{\,Triam Udom Suksa School, Bangkok 10330, Thailand; \texttt{athipatiamphongsai@gmail.com}.}
\and 
Teeradej Kittipassorn\thanks{\,Department of Mathematics and Computer Science, Faculty of Science, Chulalongkorn University, Bangkok 10330, Thailand; \texttt{teeradej.k@chula.ac.th}.}}

\begin{document}
\maketitle

\begin{abstract}
In this paper, we prove an isoperimetric inequality for the triangular grid graph which was conjectured by Adams, Gibson, and Pfaffinger, using the compression technique of Bollob\'as and Leader. Moreover, we apply the isoperimetric inequality to the Zero-Visibility Search game and Lions and Contamination game in order to obtain lower bounds for the inspection number and lion number respectively. We also provide searching strategies to prove upper bounds for both the inspection number and lion number. 
\end{abstract}

\section{Introduction}

Isoperimetric inequalities have long intrigued us. One of the earliest problems of this matter considered by the Greeks was to find the maximum area of a region given its perimeter. The solution is that the circle is best for maximizing the area. This problem can also be considered in a discrete setting, specifically on graphs. Given a graph $G$, we view a vertex subset $S$ of $G$ as a closed space with its area being the number of vertices in $S$ and its perimeter being the \emph{vertex boundary} $\partial_{G} (S)$, the collection of all vertices not in $S$ that share an edge with some vertex in~$S$.

Discrete isoperimetric inequalities have been studied in many different settings. Bollob\'as and Leader~\cite{Torus,Compression} studied discrete isoperimetric inequalities on the grid graphs and the discrete toruses. The hypercubes were also considered by Bobkov~\cite{Discretecube}, while Bharadwaj and Chandran~\cite{TreeIntro} studied the inequalities on trees. See also~\cite{Chung,Sphere,Wang} for other treatments of discrete isoperimetric inqualities. Common methods used in solving isoperimetric problems have been proposed including  Azuma's inequality~\cite{Azuma}, the concentration of measure method~\cite{Concentration}, and the compression technique~\cite{Compression}.
 

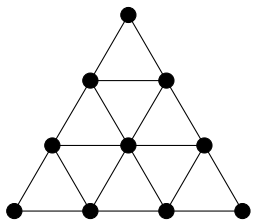
\begin{figure}[h]
	\centering
	\begin{tikzpicture}
		\filldraw (0,0) circle(0.1cm);
		\filldraw (1,0) circle(0.1cm);
		\filldraw (2,0) circle(0.1cm);
		\filldraw (3,0) circle(0.1cm);
		\filldraw (0.5,0.87) circle(0.1cm);
		\filldraw (1.5,0.87) circle(0.1cm);
		\filldraw (2.5,0.87) circle(0.1cm);
		\filldraw (1,1.73) circle(0.1cm);
		\filldraw (2,1.73) circle(0.1cm);
		\filldraw (1.5,2.60) circle(0.1cm);
		
		\draw (0,0) -- (3,0);
		\draw (0,0) -- (0.5,0.87);
		\draw (1,0) -- (0.5,0.87);
		\draw (2,0) -- (1.5,0.87);
		\draw (1,0) -- (1.5,0.87);
		\draw (2,0) -- (2.5,0.87);
		\draw (3,0) -- (2.5,0.87);
  	    \draw (0.5,0.87) -- (2.5,0.87);
   	    \draw (2,1.73) -- (1,1.73);	
            \draw (0.5,0.87) -- (1,1.73);
            \draw (1.5,0.87) -- (1,1.73);
            \draw (1.5,0.87) -- (2,1.73);
            \draw (2.5,0.87) -- (2,1.73);
            \draw (1.5,2.60) -- (2,1.73);
            \draw (1.5,2.60) -- (1,1.73);

	\end{tikzpicture}
	\caption{A triangular grid graph $T_3$}
	\label{fig:triangulargraph}
\end{figure}

In this paper, we focus on a discrete isoperimetric inequality on the \emph{triangular grid graph} denoted by $T_n$. More precisely, the graph $T_n$ is the $(n-1)$-th subdivision of a triangle containing $\frac{(n+1)(n+2)}{2}$ vertices as shown in Figure~\ref{fig:triangulargraph}. While studying a pursuit-evasion game on $T_n$, Adams, Gibson, and Pfaffinger~\cite{Lions} needed a discrete isoperimetric inequality on $T_n$. They made a conjecture which states that if $A$ is a vertex subset of $T_n$ with a given size, then $|\partial_{T_n} (A)|$ is minimized when $A$ is a \emph{row packing} or an \emph{ice cream cone packing}. We define a \emph{row packing} as a vertex packing where each row of $T_n$ can only be filled once the row below it is already full, and within each row the vertices are filled from left to right. A vertex subset is an \emph{ice cream cone packing} if its complement is a row packing. We prove the conjecture.
\begin{theorem}\label{thm:inequality}
The boundary of a vertex subset of $T_n$ of size $k$ is at least as large as the minimum of that of the row packing and ice cream cone packing of size $k$.
\end{theorem}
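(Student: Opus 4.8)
The plan is to follow the compression method of Bollob\'as and Leader: exhibit operations on subsets of $T_n$ that never increase the vertex boundary, use them to push an arbitrary subset to a structured one, and then bound the boundary of the structured subsets directly. Coordinatise $T_n$ by rows $R_0,\dots,R_n$, where $R_y$ is the set of $N_y:=n+1-y$ vertices $(0,y),\dots,(N_y-1,y)$ (so $R_0$ is the base), and $(x,y)$ is joined to $(x\pm1,y)$, to $(x,y-1)$ and $(x+1,y-1)$ in the row below, and to $(x,y+1)$ and $(x-1,y+1)$ in the row above, whenever these lie in $T_n$. For $A\subseteq T_n$ set $a_y:=|A\cap R_y|$. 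The first operation is the \emph{left-compression} $L$, which replaces $A\cap R_y$ by the $a_y$ leftmost vertices of $R_y$ for every $y$ simultaneously; I would prove the lemma $|\partial(LA)|\le|\partial A|$ by the usual local argument, since the trace of $\partial A$ on $R_{y-1}\cup R_y\cup R_{y+1}$ depends only on $A\cap R_y$ and on the (fixed) contents of $R_{y-2},\dots,R_{y+2}$, and a short case analysis shows that replacing $A\cap R_y$ by an initial segment of $R_y$ does not increase the size of this trace; iterating over $y$ gives the lemma.

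After left-compression $A$ is determined by $(a_0,\dots,a_n)$ with $0\le a_y\le N_y$, and its boundary becomes explicit: the empty vertices of $R_y$ in $\partial A$ form an initial segment of the empty part of $R_y$, whose length, when $0<a_y<N_y$, is
\[
 b_y \;=\; \min\!\bigl(\max(a_y,\ a_{y-1}-1,\ a_{y+1}),\ N_y-1\bigr)-a_y+1
\]
(drop the $a_{y-1}-1$ term when $y=0$, and set $a_{n+1}:=0$), with $b_y=0$ when $R_y$ is full and the evident variant when $R_y$ is empty, so $|\partial A|=\sum_y b_y$. In these terms a \emph{row packing} of size $k$ is the sequence with $a_y=N_y$ for $y$ below some level, $a_y=0$ above it, and at most one intermediate partial value, and an \emph{ice cream cone} is the left--right reflection of $T_n$ --- a graph automorphism, hence boundary-preserving --- applied to the sequence with $a_y=N_y$ for $y$ above some level, $a_y=0$ below it, and at most one partial value; a direct computation gives their boundaries, generically, as $n+1-c$ and $n+2-c'$, where $c$ and $c'$ count the complete rows from the base in the row packing and in the complement of the ice cream cone. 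Thus the theorem reduces to showing that
\[
 \sum_{y=0}^n b_y \;\ge\; g(n,k)\;:=\;\min\bigl(\,|\partial(\text{row packing of size }k)|,\ |\partial(\text{ice cream cone of size }k)|\,\bigr)
\]
for every admissible sequence with $\sum_y a_y=k$.

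One cannot prove this last inequality by shuttling single vertices between rows towards a canonical sequence, because the minimum $g(n,k)$ is attained not only by the row packing and the ice cream cone but also by other sequences --- for instance $(a_0,\dots,a_3)=(2,1,1,1)$ in $T_3$ with $k=5$, which is not canonical yet admits no boundary-decreasing single-vertex move --- so a descent argument cannot reach a canonical form from it. I would instead prove the inequality directly by induction on $n$, peeling off the base $R_0$. If $R_0\subseteq A$ then $A\setminus R_0$ lives in a copy of $T_{n-1}$ (the top $n$ rows) and, $R_0$ being full, all of $R_1\setminus A$ lies in $\partial A$, so $|\partial A|\ge|\partial_{T_{n-1}}(A\setminus R_0)|+|R_1\setminus A|$; if $R_0\cap A=\emptyset$ then $A$ itself lives in that copy of $T_{n-1}$ and $\partial A=\partial_{T_{n-1}}A\cup\{v\in R_0:v\sim A\}$, the extra set having size $a_1+1$ when $a_1\ge1$. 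Applying the inductive hypothesis inside $T_{n-1}$ and comparing with the explicit value of $g(n,k)$ --- which needs care, as the extra term is large exactly when the inductive bound is small --- settles these cases. The remaining sub-case, in which $R_0$ is partially filled, is genuinely different: here $A$ is a ``tall'' configuration approaching neither canonical shape (as $(2,1,1,1)$ shows), and one must bound $\sum_y b_y$ from below by a more delicate, case-heavy argument.

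The main obstacle is exactly this: finding a lower bound for $\sum_y b_y$ that is simultaneously $\ge|\partial(\text{row packing})|$ in the large-$k$ regime, where the row packing is optimal, and $\ge|\partial(\text{ice cream cone})|$ in the small-$k$ regime, where the ice cream cone is, and matching it against the piecewise (and occasionally jumping, at values of $k$ where a row first becomes full) behaviour of these two quantities --- with no shortcut available via ``reduce to a canonical set'', since the minimum is not attained only there. A secondary but still delicate point is the left-compression lemma, where the three consecutive rows involved in each local step have different lengths, which complicates the exchange argument.
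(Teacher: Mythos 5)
Your proposal sets up the right framework but stops short of the theorem: the central inequality $\sum_y b_y \ge g(n,k)$ is never established. You candidly flag the case of a partially filled base row as ``genuinely different'' and requiring ``a more delicate, case-heavy argument,'' but that case is precisely where the difficulty of the theorem lives, and no argument is supplied for it. Even the two cases you do treat (base row full, base row empty) are only sketched --- ``applying the inductive hypothesis \dots and comparing with the explicit value of $g(n,k)$ \dots settles these cases'' is an assertion, not a proof, and you yourself note that the comparison ``needs care, as the extra term is large exactly when the inductive bound is small.'' So as written this is a plan with an acknowledged hole, not a proof.

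The root of the problem is that you compress in only one direction. Left-compressing each row leaves the class of ``structured'' sets parametrised by an arbitrary sequence $(a_0,\dots,a_n)$, which, as your own example $(2,1,1,1)$ in $T_3$ shows, contains local minima of the boundary functional that are neither row packings nor ice cream cones; this is exactly what blocks both a descent argument and your row-peeling induction. The paper escapes this by compressing in \emph{two} of the three lattice directions (columns pushed down and rows pushed left, or both pushed the other way), choosing among all sets of the given size and no larger neighbourhood one that minimises (resp.\ maximises) the sum of positions in a simplicial ordering. A bi-compressed set is a down-set in both coordinates, which is rigid enough that an exchange argument between its outermost and innermost incomplete diagonals (Lemma~\ref{emptyfulldiagonal}) forces it to be an initial or final segment of the simplicial ordering whenever it misses, respectively contains, the long diagonal $|x|=n$; the remaining sets --- those meeting the long diagonal without containing it --- are handled by direct counting (the broken-line argument of Case~1.2 and the two-subtriangle decomposition of Case~2.2). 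If you want to salvage your approach, you need either to add the second compression direction and a potential function that certifies termination, or to actually carry out the case analysis for tall configurations that you have deferred; without one of these the argument does not close.
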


The proof is based on the compression technique of Bollob\'as and Leader~\cite{Compression}. We will demonstrate some applications of this discrete isoperimetric inequality on pursuit-evasion games.

Graph-based pursuit-evasion games can be seen as games where the objective is for a searcher to capture an intruder who resides within the graph and can move in some manner within the graph. Depending on the conditions of the graph, searcher, intruder and movement etc., we obtain different variants of pursuit-evasion games. Pursuit-evasion games have many applications in air traffic control~\cite{traffic}, collision avoidance~\cite{collision}, and tracking~\cite{tracking}.
One of the most prominent pursuit-evasion games on graphs is the \emph{Cops and Robber} game which originated from the works of Aigner and Fromme~\cite{aigner}, Nowakowski and Winkler~\cite{nowakowski}, and Quilliot~\cite{quilliot}. In this particular pursuit-evasion game a set of cops moving along the edges of a graph attempt to capture a single robber who can move along the edges of a graph as well. During the game, the cops and robber alternate turns moving. The game ends when the robber is caught.

In the classic Cops and Robber game, cops know where the robber is within the graph. However, in some variants of pursuit-evasion games the searcher does not have any knowledge of where the intruder is residing in. These games are classified as \emph{zero visibility} search games. The \emph{Zero Visibility Cops and Robber} game was originally proposed by To\v{s}i\'{c}~\cite{Early}. The rules in this game are almost the same as the Cops and Robber game, except that the cops have no knowledge of where the robber is. Dereniowski, Dyer, Tifenbach, and Yang~\cite{zerocop2} investigated this model on trees and Neufeld and Nowakowski~\cite{Neufeld} considered this model on products of graphs. Another zero visibility search game is the \emph{Hunters} \& \emph{Rabbit} game which was developed from the works of Britnell and Wildon~\cite{hunter2}, and Haslegrave~\cite{hunter1}. In this game the searcher can choose to examine an arbitrary set of vertices within a graph, while the intruder must move along the edges of the graph in each turn. The searcher and intruder alternate turns, and the searcher has no knowledge of where the intruder is. The game ends when the intruder is caught. Abramovskaya, Fomin, Golovach, and Pilipczuk~\cite{hunter3} investigated the game on an $(m \times n)$-grid and trees, Gruslys and Me\'roueh~\cite{hunter4} studied this game in terms of cats and a mouse. Other studies on pursuit-evasion games, not limited to zero visibility games, can also be found in~\cite{Adler,Brass,Fomin,Boting}.
 

In this paper, we will consider two variations of pursuit-evasion games.

The first variant of pursuit-evasion games we will consider is called the \emph{Zero-Visibility $k$-Search} game proposed by Bernshteyn and Lee~\cite{Search}. The rules of this game are extremely similar to that of the Hunters \& Rabbit game except that the intruder does not necessarily have to move during each turn. In this game, the searcher is allowed to examine any $k$ vertices during each turn which is equivalent to having $k$ hunters in the Hunters \& Rabbit game. A winning strategy for the searcher is a finite sequence of moves which guarantees that the intruder will always be caught no matter how the intruder moves along the graph. The minimum $k$ such that a winning strategy for the searcher exists is called the \emph{inspection number} of a graph $G$ and is denoted by $\In(G)$.

We provide lower and upper bounds for the inspection number of the triangular grid graphs. We prove the lower bound by applying Theorem~\ref{thm:inequality} together with a result of Bernshteyn and Lee~\cite{Search}, and prove the upper bound $\lceil\frac{3n}{4}\rceil+2$.

\begin{theorem}\label{thm:search}
 For all positive integers $n$, we have $\frac{n}{\sqrt{2}}<\In(T_n)\le \lceil\frac{3n}{4}\rceil+2$.
\end{theorem}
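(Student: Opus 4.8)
\emph{Plan of proof.} The two inequalities are proved separately: the lower bound by a potential/counting argument driven by Theorem~\ref{thm:inequality}, the upper bound by exhibiting an explicit searching strategy. For the lower bound I would invoke the general lower bound principle for the Zero-Visibility $k$-Search game of Bernshteyn and Lee~\cite{Search}. In this game the set $C$ of vertices that may still hide the intruder starts as $V(T_n)$, is updated after one turn to $N_{T_n}[C\setminus X]$ for the examined set $X$ with $|X|\le k$, and a winning strategy must bring $C$ to $\emptyset$. Two elementary facts drive everything: first, $|C|$ cannot drop by more than $k$ in a turn, since $C\setminus X$ survives; second, after one turn the new set has size at least $|C\setminus X|+|\partial_{T_n}(C\setminus X)|\ge(|C|-k)+f(|C|-k)$, where $f(s)$ denotes the minimum possible boundary of an $s$-subset, which by Theorem~\ref{thm:inequality} equals the minimum of the boundary of the row packing and of the ice cream cone of size $s$. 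Hence $|C|$ cannot strictly decrease whenever $f(|C|-k)\ge k$. Using the explicit formulas for the two packings I would pin down the set $\{s:f(s)\ge k\}$ as an interval, essentially $[\binom{k}{2},\,\binom{n+2}{2}-\binom{k+1}{2}]$, so that $|C|$ is trapped in a ``forbidden band'' of width about $\binom{n+2}{2}-k^{2}$ that a winning strategy would have to traverse in steps of size at most $k$ and cannot jump over. This is impossible once $k\le\binom{n+2}{2}-k^{2}$ (and the band lies strictly below $|V(T_n)|$, which holds for $k$ at least a constant), a condition that simplifies to $k^{2}\lesssim\binom{n+2}{2}$, i.e.\ $k\le\sqrt{(n+1)(n+2)/2}$. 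Since $\sqrt{(n+1)(n+2)/2}>n/\sqrt2$, taking $k$ to be the largest integer below this threshold gives $\In(T_n)>n/\sqrt2$, with the finitely many small $n$ not covered by the ``constant lower bound on $k$'' caveat checked by hand.

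For the upper bound I would give an explicit winning strategy for a searcher with $\lceil\tfrac{3n}{4}\rceil+2$ inspections per turn. The idea is to clear $T_n$ by driving a moving frontier (a separator between the cleared and uncleared parts) across the triangle, choosing the shape of the frontier and the direction of the sweep so that the examined set stays within $\lceil\tfrac{3n}{4}\rceil+2$ at every turn. A straight frontier parallel to one of the three sides must in general be held two consecutive lines deep to advance without leaking, so a single sweep across the whole triangle would cost roughly twice its maximal width; the strategy therefore proceeds in phases, each clearing a region of small ``relevant width'', and uses the fact that a frontier which shrinks as it advances can be pushed forward while held only one line deep. Balancing the phases — clearing a sub-triangle from the apex side with a thickened frontier parallel to the long side, then clearing the remaining trapezoid with a frontier parallel to a different lattice direction — and adding the constant overhead needed to move each frontier forward cleanly is what produces exactly $\lceil\tfrac{3n}{4}\rceil+2$. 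One then has to verify carefully that at each turn examining the current frontier (plus $O(1)$ extra vertices) really does enlarge the cleared region without any part of it becoming re-contaminated, or, if a non-monotone scheme turns out to be needed, that no contaminated region ever regrows fully.

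The main obstacle in the lower bound is that $n/\sqrt2$ is tight in leading order, so the computation of the endpoints of $\{s:f(s)\ge k\}$ from Theorem~\ref{thm:inequality}, including the comparison of the row-packing and ice-cream-cone boundaries over the whole range of $s$ and the exact threshold for $k$, must be done with honest constants rather than asymptotics. The main obstacle in the upper bound is reconciling the ``thick frontier'' phenomenon, which threatens a factor of $2$, with the target $\lceil\tfrac{3n}{4}\rceil+2$: this forces a careful multi-phase design, a good choice of sweep direction in each phase, and a proof that each advance of the frontier costs only a bounded number of extra vertices per turn.
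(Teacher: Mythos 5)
Your lower bound is essentially the paper's argument. The paper also starts from the Bernshteyn--Lee lemma (restated in terms of the exterior boundary: if $\In(G)\le m$ then for every $i$ there is a set $D$ with $i<|D|<i+m$ and $|\partial_G(D)|<m$), and then uses Theorem~\ref{thm:inequality} together with the explicit boundary formulas for initial/final segments of the simplicial ordering to exhibit a window $i=(m+1)+(m+2)+\dots+(n+1)$ in which every set has boundary at least $m$; the feasibility condition $(n+1)(n+2)\ge 2m^2$ is exactly your $k^2\lesssim\binom{n+2}{2}$ threshold, so taking $m=\lfloor n/\sqrt2\rfloor$ gives $\In(T_n)>n/\sqrt2$. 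Your ``forbidden band'' rederivation of the lemma is sound (the key monotonicity you need, that $\min_{|A|=s}|N(A)|$ is nondecreasing in $s$, does hold), and your endpoints $\bigl[\binom{k}{2},\binom{n+2}{2}-\binom{k+1}{2}\bigr]$ agree with the paper's computation, so this half is fine.

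The upper bound, however, is a genuine gap: you describe the shape of a strategy (a swept frontier, multiple phases, a balance producing $3n/4$) but never construct it, and the construction is the entire content of this half of the theorem. Concretely, the paper's strategy has three stages with $k=\lceil 3n/4\rceil+2$: (1) clear the top $k-2$ rows one row at a time, each turn examining a sliding window of at most $k$ consecutive vertices spanning the row being cleared and the row beneath it; (2) sweep the cleared region downward through columns $0$ to $n-k+1$, where each turn examines two partial rows of length $n-k+2$ \emph{plus} a moving block of about $n-k+4$ vertices guarding the right flank against recontamination, for a total of $3(n-k)+8$ vertices; (3) clear the residual triangle on columns $n-k+2$ to $n$ column by column. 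The constant $3/4$ comes precisely from the requirement $3(n-k)+8\le k$ in stage (2); your sketch never produces this inequality, never specifies which vertices are examined in a given turn, and never verifies the crucial non-recontamination claims (e.g.\ that the previously cleared rows above the frontier stay clear while the flank guard retreats). Without these, the bound $\lceil 3n/4\rceil+2$ is asserted rather than proved, and indeed a naive single-direction sweep of the kind you first describe only gives a bound of order $n$ with a worse constant.
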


The second variant of pursuit-evasion games we will consider is a version of the Zero Visibility Cops and Robber game where the cops and robber move simultaneously during each round. Moreover, if the robber transverses the same edge of the graph as a cop, the robber will be caught. This problem can be viewed from another approach proposed by Berger, Gilbers, Gr\"{u}ne and Klein~\cite{Berger} that makes the robber unnecessary. Instead of considering the single robber, they considered the set of all possible positions of the robber, and let these positions be represented by contamination. Also, they used lions as the searchers instead of cops. Since cops have no knowledge of where the intruder is residing in the Zero Visibility Cops and Robber game, the cops must eliminate all the possible positions of the robber to make sure that they are able to catch the robber. This is equivalent to the lions having to clear all of the contamination. 

Berger, Gilbers, Gr\"{u}ne and Klein called this game the \emph{Lions and Contamination} problem. At the start of the game, every vertex that is not occupied by a lion is contaminated. During each turn, the lions and contamination move simultaneously but the contamination cannot spread through an edge that the lions use to travel during a certain turn. The lions can choose to stay where they are, or move to an adjacent vertex. The contamination will travel along the edges of the graph to a vertex that is not blocked by a lion and can recontaminate previously cleared vertices. For example, if a lion moves out of a vertex using a certain edge, the contamination can spread into that vertex from a different edge in the same turn. A winning strategy for the lions is a finite sequence of moves which guarantees that the lions will clear the graph of contamination. The minimum number of lions $l$ such that a winning strategy exists is called the \emph{lion number} and is denoted by $l(G)$. 

We will prove a lower bound for the lion number of triangular grid graphs conjectured by Adams, Gibson, and Pfaffinger~\cite{Lions} by illustrating a connection between the Lions and Contamination game and the Zero-Visibility Search game. Note that Adams, Gibson, and Pfaffinger~\cite{Lions} sketched another method to deduce the lower bound from Theorem~\ref{thm:inequality}. We also prove the upper bound using an idea similar to that of Adams, Gibson, and Pfaffinger~\cite{Lions}.

\begin{theorem}\label{thm:lion}
For all positive integers $n$, we have $\frac{n}{2\sqrt{2}}<l(T_n)\le n+1$.
\end{theorem}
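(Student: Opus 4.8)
The plan is to establish the two bounds separately, piggy-backing on the inspection-number results of Theorem~\ref{thm:search} wherever possible. For the lower bound $l(T_n) > \frac{n}{2\sqrt{2}}$, I would first prove a general inequality relating the lion number to the inspection number, namely that $\In(G) \le 2\,l(G)$ for every graph $G$. The idea is that a winning strategy for $l$ lions in the Lions and Contamination game can be simulated by a searcher using $2l$ inspections per turn: at each turn the searcher inspects both the current vertices occupied by the lions and the vertices they are about to move to, which blocks exactly the same contamination transfers that the lions block by traversing those edges. One must check carefully that the ``simultaneous move'' semantics and the edge-traversal-catching rule in the Lions game are faithfully reproduced by inspecting the $\le 2l$ relevant vertices at the appropriate moment; since the set of possible robber positions in the Lions game evolves exactly like the contaminated set in the Zero-Visibility Search game under this correspondence, a sequence of moves clearing the contamination yields a sequence of inspections catching the intruder. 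Combining $\In(G) \le 2\,l(G)$ with the bound $\In(T_n) > \frac{n}{\sqrt{2}}$ from Theorem~\ref{thm:search} gives $l(T_n) > \frac{n}{2\sqrt{2}}$ immediately.

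For the upper bound $l(T_n) \le n+1$, I would exhibit an explicit sweeping strategy for $n+1$ lions, following the approach sketched by Adams, Gibson, and Pfaffinger. Label the rows of $T_n$ from the bottom ($n+1$ vertices) to the top ($1$ vertex). Place the $n+1$ lions on the bottom row so that every vertex there is occupied; the graph is then clear below and on that row. The lions then advance ``diagonally'' one step at a time, maintaining the invariant that the lions occupy a full horizontal cross-section of $T_n$ (a union of two adjacent partial rows forming a barrier) with everything below it clear. Because a full row of $T_n$ at height $j$ has at most $n+1-j \le n+1$ vertices and consecutive rows differ by one vertex, $n+1$ lions always suffice to cover the barrier during the transition; the key point is to order the individual lion moves within a turn so that at no instant does contamination leak across the barrier through an edge that momentarily has both endpoints unguarded. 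One caps off the sweep once the barrier shrinks to the apex, at which point the whole graph is clear.

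The main obstacle I expect is the upper-bound construction, specifically verifying the no-leak condition during the simultaneous-move transitions. Unlike classical graph searching, here contamination moves at the same time as the lions and can slip through an edge the instant neither endpoint is occupied, so one cannot argue turn-by-turn with a static barrier; instead one must track the barrier through each intermediate configuration and confirm that the union of lion positions plus traversed edges still separates the cleared region from the contaminated region at every substep. The triangular adjacency structure — each internal vertex having six neighbours, three in the row above and below — makes the bookkeeping delicate, and getting the exact constant $n+1$ (rather than, say, $n+2$) requires exploiting that the bottom row is the unique widest row and that the barrier can be chosen to ride just one row thick except at the moment of advancing. The lower-bound step, by contrast, should be routine once the simulation $\In(G)\le 2\,l(G)$ is set up correctly, the only subtlety being a faithful translation between the two games' rule sets.
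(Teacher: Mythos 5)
Your lower-bound argument is exactly the paper's: it proves $\In(G)\le 2\,l(G)$ by having the searcher inspect $P(k-1)\cup P(k)$ (the lions' positions before and after turn $k$) and showing by induction that the decontaminated set is always contained in the intruder-free set, then combines this with $\In(T_n)>n/\sqrt{2}$ from Theorem~\ref{thm:search}. That part is sound and matches the paper.

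The upper bound is where your plan has a genuine gap. You propose to advance a full barrier of lions from one cross-section to the next and to ``order the individual lion moves within a turn'' to prevent leakage. But the game has no such ordering: all lions and the contamination move \emph{simultaneously} in a single turn, and contamination is blocked only on edges actually traversed by a lion in that turn. If the whole barrier advances in one turn, leakage really does occur: in the column-sweep picture, when the lion at $(j,y)$ moves to $(j+1,y)$ it blocks that edge, but the diagonal edge from the contaminated vertex $(j+1,y-1)$ into the just-vacated $(j,y)$ is traversed by no lion, so $(j,y)$ is recontaminated. So the simultaneous full-barrier advance fails outright, and ``ordering moves within a turn'' is not a legal repair. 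The paper's fix is to spread the advance over $n+1-j$ \emph{separate turns}: only one lion moves per turn, so at every moment each just-vacated vertex has all of its potentially contaminated neighbours either occupied by a stationary lion or reached only via the single edge the moving lion traverses. With that modification (start on the widest row or column and shuffle the lions across one per turn), your sweep does give $l(T_n)\le n+1$; without it, the no-leak verification you correctly flag as the main obstacle cannot be completed.
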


The rest of this paper is organized as follows. Section~\ref{sec:thm:inequality} is devoted to the proof of Theorem~\ref{thm:inequality}. In Section~\ref{sec:thm:search}, we study the Zero Visibility $k$-Search game on the triangular grid graph, and prove Theorem~\ref{thm:search}. In Section~\ref{sec:thm:lion}, we study the Lions and Contamination game, and prove Theorem~\ref{thm:lion}. Finally, we conclude the paper in Section~\ref{sec:conclusion} with some open problems.

\section{Proof of Theorem~\ref{thm:inequality}}
\label{sec:thm:inequality}

In this section we will prove a discrete isoperimetric inequality on the triangular grid graph $T_n$.

First, let us give a few definitions for the sake of the proof. Let a vertex $v$ of the triangular grid graph have coordinates $(v_1,v_2)$ where $v_1$ is the number of vertices within the same row that are to the left of $v$ and $v_2$ is the number of rows below the row of $v$. For a vertex $v$ of the triangular grid graph with coordinates $(v_1,v_2)$, define $|v|$ as the sum of $v_1$ and $v_2$. Also, let us shift the triangular grid graph, so that the vertices who share the same first coordinate are arranged in columns, and the vertices who share the same second coordinates are arranged in rows (see Figure~\ref{fig:shiftedtriangulargraph}).

\begin{figure}[h]
	\centering
	\begin{tikzpicture}
		\filldraw (0,0) circle(0.1cm);
		\filldraw (1,0) circle(0.1cm);
		\filldraw (2,0) circle(0.1cm);
		\filldraw (3,0) circle(0.1cm);
		\filldraw (0,1) circle(0.1cm);
		\filldraw (0,2) circle(0.1cm);
		\filldraw (0,3) circle(0.1cm);
		\filldraw (1,1) circle(0.1cm);
		\filldraw (1,2) circle(0.1cm);
		\filldraw (2,1) circle(0.1cm);
		
		\draw (0,0) -- (3,0);
		\draw (0,0) -- (0,3);
		\draw (1,0) -- (1,2);
		\draw (2,0) -- (2,1);
		\draw (0,3) -- (3,0);
		\draw (2,0) -- (0,2);
		\draw (1,0) -- (0,1);
  	    \draw (0,1) -- (2,1);
   	    \draw (0,2) -- (1,2);

            \node at (0.1,-0.5) {(0,0)};
            \node at (1.1,-0.5) {(1,0)};
            \node at (2.1,-0.5) {(2,0)};
            \node at (3.1,-0.5) {(3,0)};
            \node at (0.1,0.5) {(0,1)};
            \node at (1.1,0.5) {(1,1)};
            \node at (2.1,0.5) {(2,1)};
            \node at (0.1,1.5) {(0,2)};
            \node at (1.1,1.5) {(1,2)};
            \node at (0.1,2.5) {(0,3)};

	\end{tikzpicture}
	\caption{A shifted $T_3$}
	\label{fig:shiftedtriangulargraph}
\end{figure}
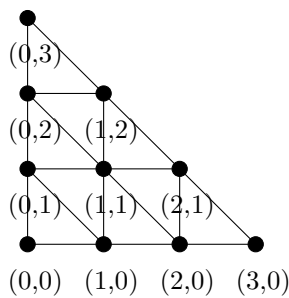

We say that \emph{$u$ comes before $v$ in the simplicial ordering} on the triangular grid graph if either $|u| < |v|$, or $|u| = |v|$ and $u_1 > v_1$.

Let $A \subseteq V(T_n)$. The \emph{$1$-sections} of $A$ are the sets $A_{1,t} = \{ v_2 : (v_1,v_2) \in A , v_1=t \}$ for $0 \le t \le n$. Similarly, the \emph{$2$-sections} of $A$ are the sets $A_{2,t} = \{ v_1 : (v_1,v_2) \in A , v_2=t \}$ for $0 \le t \le n$. 

We let the \emph{$1$-left-compression} $C_1^{-}(A)$ of $A$ be obtained from $A$ by pushing the vertices in each column downward, and we let the \emph{$2$-left-compression} $C_2^{-}(A)$ of $A$ be obtained from $A$ by pushing the vertices in each row to the left as shown in Figure~\ref{fig:compressionimage}. Similarly, the \emph{$1$-right-compression} 
$C_1^{+}(A)$ of $A$ is obtained from $A$ by pushing the vertices in each column upward, and the 
\emph{$2$-right-compression} $C_2^{+}(A)$ of $A$ is obtained from $A$ by pushing the vertices in each row to the right.
More precisely for $i = 1,2$, $C_i^{-}(A)$ is defined by its $i$-sections as follows
\[(C_i^{-}(A))_{i,t} = \{0,1,2,\dots,|A_{i,t}|-1\},\]
and $C_i^{+}(A)$ is defined by its $i$-sections as follows
\[(C_i^{+}(A))_{i,t} = \{n-t,n-t-1,n-t-2,\dots,n+1-t-|A_{i,t}|\}.\]

\begin{figure}\label{compressionimage}
	\centering
	\begin{tikzpicture}
		\draw (0,0) circle(0.1cm);
		\draw (0,1) circle(0.1cm);
		\draw (0,2) circle(0.1cm);
		\draw (1,0) circle(0.1cm);
		\draw[fill = green] (1,1) circle(0.1cm);
		\draw[fill = green] (2,0) circle(0.1cm);

		\draw (0,0) -- (2,0);
		\draw (0,0) -- (0,2);
		\draw (1,0) -- (1,1);
		\draw (0,1) -- (1,1);
            \draw (0,1) -- (1,0);
            \draw (0,2) -- (2,0);

             \draw (3,0) circle(0.1cm);
		\draw (3,1) circle(0.1cm);
		\draw (3,2) circle(0.1cm);
		\draw[fill = green] (4,0) circle(0.1cm);
		\draw (4,1) circle(0.1cm);
		\draw[fill = green] (5,0) circle(0.1cm);
		
		\draw (3,0) -- (5,0);
		\draw (3,0) -- (3,2);
		\draw (4,0) -- (4,1);
		\draw (3,1) -- (4,1);
            \draw (3,1) -- (4,0);
            \draw (3,2) -- (5,0);

            \draw[fill = green] (6,0) circle(0.1cm);
		\draw[fill = green] (6,1) circle(0.1cm);
		\draw (6,2) circle(0.1cm);
		\draw (7,0) circle(0.1cm);
		\draw (7,1) circle(0.1cm);
		\draw (8,0) circle(0.1cm);
		
		\draw (6,0) -- (8,0);
		\draw (6,0) -- (6,2);
		\draw (7,0) -- (7,1);
		\draw (6,1) -- (7,1);
            \draw (6,1) -- (7,0);
            \draw (6,2) -- (8,0);

        \end{tikzpicture}

        \caption{The first image is a possible vertex subset of $T_3$ which we shall call $A$. The second image is $C_1^{-}(A)$. The third image is $C_2^{-}(A)$.}
	\label{fig:compressionimage}
\end{figure}
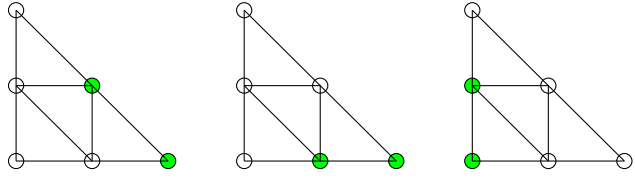

Next we will provide some lemmas necessary for the proof in both cases of Theorem~\ref{thm:inequality}. Lemma~\ref{neighborhood} shows that when a set of vertices is compressed, the neighborhood will never become larger. The \emph{neighborhood} $N(A)$ of a vertex subset $A$ of $T_n$ refers to the vertex subset $A \cup \partial_{T_n}(A)$. Lemma~\ref{emptyfulldiagonal} proves that the initial segment of the simplicial ordering minimizes the boundary size of vertex subsets that do not intersect with the diagonal $|x| = n$. Moreover, Lemma~\ref{emptyfulldiagonal} also proves that the final segment of simplicial ordering minimizes the boundary size of vertex subsets where all the vertices on the diagonal $|x| = n$ are members of that vertex subset.

\begin{lemma}\label{neighborhood}
If $A \subseteq V(T_n)$, then $|N(A)| \ge |N(C_i^{-}(A))|$ and $|N(A)| \ge |N(C_i^{+}(A))|$ for $i = 1,2$.
\end{lemma}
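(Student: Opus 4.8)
The plan is to prove the statement for a single compression operator, say $C_1^{-}$, and note that the arguments for $C_1^{+}$, $C_2^{-}$, $C_2^{+}$ are symmetric (obtained by reflecting the triangular grid, which permutes the three ``directions'' of the grid and swaps left with right within each direction). So fix $i=1$ and write $C = C_1^{-}$. The key observation is that $N(A)$ decomposes nicely with respect to $1$-sections: a vertex $(v_1,v_2)$ lies in $N(A)$ iff $(v_1,v_2) \in A$, or one of its neighbors lies in $A$. In the shifted picture the neighbors of $(v_1,v_2)$ are $(v_1\pm 1, v_2)$, $(v_1, v_2\pm 1)$, $(v_1-1,v_2+1)$, and $(v_1+1,v_2-1)$ (those that exist in $T_n$). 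Thus whether $(v_1,v_2)\in N(A)$ depends only on the three consecutive columns $A_{1,v_1-1}$, $A_{1,v_1}$, $A_{1,v_1+1}$, and more precisely only on which of $v_2-1, v_2, v_2+1$ lie in the relevant sections.

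The main step is a three-column local inequality. For each column index $t$, let $N(A)_{1,t} = \{v_2 : (t,v_2)\in N(A)\}$ denote the $1$-section of the neighborhood in column $t$. I would show that $|N(A)_{1,t}|$ depends only on $|A_{1,t-1}|$, $|A_{1,t}|$, $|A_{1,t+1}|$ once $A$ has been $1$-left-compressed, and that left-compressing can only decrease each such size. Concretely: after compression, column $t$ of $C(A)$ is the down-set $\{0,\dots,|A_{1,t}|-1\}$, so its neighborhood within column $t$ together with contributions from columns $t\pm 1$ is again a down-set, of size $\max$ over the three neighboring down-set sizes, adjusted by $\pm 1$ for the ``diagonal'' neighbors — this is a clean formula. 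For the uncompressed set, I claim $|N(A)_{1,t}| \ge$ the same formula evaluated at $|A_{1,t-1}|, |A_{1,t}|, |A_{1,t+1}|$: indeed the projection of $N(A)\cap(\text{column }t)$ to second coordinates contains at least $|A_{1,t}|$ elements from $A_{1,t}$ itself, plus it must ``reach up one step'' from whichever adjacent column is tallest via the diagonal edge, and a short counting argument (an interval of integers containing a given set of size $m$ plus a shifted copy of a set of size $m'$ has size at least $\max(m, m'+1)$ when the shift is by one and the ambient set is an interval — or more carefully, one uses that $|S \cup (S'+1)| \ge \max(|S|, |S'|+1) $ fails in general, so one must instead track the maximum element) gives the bound. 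Summing the per-column inequality over all $t$ yields $|N(A)| \ge |N(C(A))|$.

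The hard part, and the place I expect to spend the most care, is the per-column inequality for the \emph{uncompressed} set: unlike the compressed case, $A_{1,t}$ need not be an interval, so ``$N(A)$ restricted to column $t$'' is not simply controlled by cardinalities, and the diagonal edges (which connect column $t$ to column $t+1$ but with a vertical shift) mean the naive bound $|N(A)_{1,t}| \ge \max(|A_{1,t-1}|, |A_{1,t}|+?, |A_{1,t+1}|)$ can be delicate at the boundary of the triangle. I would handle this by first compressing \emph{every} column simultaneously is not allowed in one step for the bound I want — instead I would argue column by column, or better, observe that it suffices to prove $|N(A)| \ge |N(A')|$ where $A'$ agrees with $A$ except that one chosen column has been replaced by its down-set; iterating over all columns then gives $C(A)$. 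For that single-column replacement, the three-column local analysis is genuinely finite (only columns $t-1, t, t+1$ change their contribution to $N$), and the inequality reduces to checking that replacing a set $S\subseteq\{0,\dots,m\}$ by the interval $\{0,\dots,|S|-1\}$ does not increase the size of $S \cup (S-1) \cup (S+1) \cup (\text{shifts interacting with the two neighboring columns})$, which follows from the standard fact that among subsets of a fixed size, initial segments minimize the size of such ``thickened'' sets on a path — exactly the spirit of the Bollob\'as--Leader compression argument, here applied one coordinate at a time.
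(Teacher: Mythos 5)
Your fallback route---the per-section inequality $|N(A)_{1,t}|\ge|N(C_1^{-}(A))_{1,t}|$ for every $t$, summed over $t$---is exactly the paper's argument and is the right one, but you stop short of the idea that makes it work, and the alternative you describe as ``better'' is false. It is \emph{not} true that replacing a single column of $A$ by its down-set cannot increase $|N(\cdot)|$: in $T_3$ take $A=\{(0,3),(1,2),(2,1)\}$ (three of the four hypotenuse vertices), so that $N(A)$ consists of $A$ together with $(3,0)$ and the diagonal $\{(0,2),(1,1),(2,0)\}$, giving $|N(A)|=7$; replacing column $1$ alone by its down-set yields $A'=\{(0,3),(1,0),(2,1)\}$ with $|N(A')|=10$ (all of $T_3$). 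So the one-column-at-a-time iteration cannot even take its first step here, and no induction of that shape reaches $C_1^{-}(A)$. (Compressing \emph{all} columns simultaneously gives $\{(0,0),(1,0),(2,0)\}$, whose neighborhood has size $7$, consistent with the lemma.)

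The mechanism you are missing, and the reason simultaneous compression is essential, is nestedness. Write $N(A)_t$ as the union of the contributions of the three sections $A_{t-1}$, $A_t$, $A_{t+1}$, each contribution being the section together with its relevant shifts, with the upward shift truncated at the hypotenuse. After compressing \emph{every} column, each of these contributions for $B=C_1^{-}(A)$ is a down-set of $\{0,1,2,\dots\}$; down-sets are pairwise nested, so the union has size equal to the \emph{maximum} of the three sizes. Each of the three sizes for $B$ is at most the size of the corresponding contribution for $A$ (carrying the truncation $\cap\,[n-t]$ in both formulas is what disposes of the ``$|S\cup(S'+1)|\ge|S'|+1$ can fail'' worry you raise), and each contribution for $A$ is a subset of $N(A)_t$. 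Chaining these gives $|N(B)_t|\le\max(\cdots)\le|N(A)_t|$ with no delicate case analysis, and summing over $t$ finishes the proof. Your symmetry reduction to a single operator is fine (the paper likewise obtains $C_i^{+}$ from $C_i^{-}$ by a reflection), but as written the proposal neither establishes the per-section bound nor offers a valid substitute for it.
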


\begin{proof}
First, we will show that $|N(A)| \ge |N(C_i^{-}(A))|$. Fix $1 \le i \le 2$. Let $B = C_i^{-}(A)$ and $0 \le t \le n$.
We write $[j] = \{1,2,3,\dots,j\}$.
For convenience, we write $A_t$ for $A_{i,t}$ and similarly for other subsets of $V(T_n)$.
For $S \subseteq \{0,1,2,\dots,n\}$, we define $S^{-} = \{x-1 \in \{0,1,2,\dots,n\}: x \in S\}$ and $S^{+} = \{x+1 \in \{0,1,2,\dots,n\}: x \in S\}$.
Clearly
\[N(A)_t = A_t \cup A_t^{-} \cup (A_t^{+} \cap [n-t]) \cup A_{t-1} \cup A_{t-1}^{-} \cup A_{t+1} \cup A_{t+1}^{+}\]
and
\[N(B)_t = B_t \cup B_t^{-} \cup (B_t^{+} \cap [n-t]) \cup B_{t-1} \cup B_{t-1}^{-} \cup B_{t+1} \cup B_{t+1}^{+}.\]
Since $B_t \cup B_t^{-} \cup (B_t^{+} \cap [n-t]) = B_t \cup (B_t^{+} \cap [n-t])$,
$B_{t-1} \cup B_{t-1}^{-} = B_{t-1}$, and $B_{t+1} \cup B_{t+1}^{+}$ are nested, we deduce that
\[|B_t \cup B_t^{-} \cup B_t^{+} \cup B_{t-1} \cup B_{t-1}^{-} \cup B_{t+1} \cup B_{t+1}^{+}| \le \max\left\{|B_t \cup (B_t^{+} \cap [n-t])|, |B_{t-1}|, |B_{t+1} \cup B_{t+1}^{+}|\right\}.\] 
Also $|B_t \cup (B_t^{+} \cap [n-t])| \le |A_t \cup A_t^{-} \cup (A_t^{+} \cap [n-t])|$, $ |B_{t-1}| = |A_{t-1}|$ and $|B_{t+1} \cup B_{t+1}^{+}| \le |A_{t+1} \cup A_{t+1}^{+}|$.
Therefore, 
\[|N(B)_t| \le \max\left\{|A_t \cup A_t^{-} \cup (A_t^{+} \cap [n-t])|, |A_{t-1}|, |A_{t+1} \cup A_{t+1}^{+}|\right\} \le |N(A)_t|.\] 
Thus,
\[|N(A)| = \sum_{t=0}^n |N(A)_t| \ge \sum_{t=0}^n |N(B)_t| = |N(B)| = |N(C_i^{-}(A))|.\]

For the inequality $|N(A)| \ge |N(C_i^{+}(A)|$, view the compressions in the form of Figure~\ref{fig:shiftedtriangulargraph} in the form of Figure~\ref{fig:triangulargraph} by shifting all the vertices and edges from a triangular grid graph in the form of Figure~\ref{fig:shiftedtriangulargraph} so that it is in the form Figure~\ref{fig:triangulargraph}. 
Observe that the $2$-right-compression $C_2^{+}(A)$ is the reflection of the $2$-left-compression $C_2^{-}(A)$ about the height from the top vertex.
Thus, $|N(A)| \ge |N(C_2^{+}(A))|$ follows from $|N(A)| \ge |N(C_2^{-}(A))|$. Similarly, $|N(A)| \ge |N(C_1^{+}(A))|$ \qedhere
\end{proof}



\begin{lemma}
\begin{enumerate}

\item If $A \subseteq V(T_n)$ and the diagonal $|x| = n$ has no member of $A$, then $|N(A)| \ge |N(C)|$ where $C$ is the initial segment of the simplicial ordering of size $|A|$.

\item If $A \subseteq V(T_n)$ and all the vertices on the diagonal $|x| = n$ are members of $A$, then $|N(A)| \ge |N(D)|$ where $D$ is the final segment of the simplicial ordering of size $|A|$.

\end{enumerate}
\label{emptyfulldiagonal}
\end{lemma}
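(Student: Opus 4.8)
The plan is to use the compressions of Lemma~\ref{neighborhood} to reduce to monotone ``staircase'' sets and then read off the boundary size directly from the staircase shape. For part~(i), start from $A$ disjoint from the diagonal $|x|=n$ and repeatedly apply $C_1^-$ and $C_2^-$. Each step keeps $|A|$ fixed, does not increase $|N(\cdot)|$ by Lemma~\ref{neighborhood}, and preserves disjointness from the diagonal, since the hypothesis forces $|A_{i,t}|\le n-t$ for every section and compression only moves vertices towards the origin. As $\sum_{v\in A}|v|$ strictly decreases at each nontrivial step, we halt at a set $B$ fixed by $C_1^-$ and $C_2^-$; such a $B$ is \emph{down-left-closed} ($(a,b)\in B$, $a'\le a$, $b'\le b$ imply $(a',b')\in B$) and so is described by a partition $\lambda_0\ge\lambda_1\ge\cdots$ with $\lambda_j=|B_{2,j}|\le n-j$. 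Since $|N(A)|\ge|N(B)|$ and the initial segment $C$ of size $|A|$ is itself down-left-closed, it remains to prove $|N(B)|\ge|N(C)|$ for all such $B$.

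The crux is an explicit formula for the boundary of a staircase. A short case analysis on the six neighbour types of $T_n$ shows that, for down-left-closed $B$, a vertex $(c,j)\notin B$ lies in $\partial_{T_n}(B)$ exactly when $c=\lambda_j$ or $c\le\lambda_{j-1}-1$, and the bound $\lambda_j\le n-j$ guarantees each such vertex lies in the triangle, so its edge needs no special treatment. Summing over rows and telescoping yields
\[|\partial_{T_n}(B)|=1+\lambda_0+(\ell-s),\]
where $\ell$ is the number of nonzero parts and $s$ the number of distinct part sizes of $\lambda$; hence $|N(B)|=|A|+1+\lambda_0+(\ell-s)$. It thus suffices to minimise $\lambda_0+\ell-s$ over all partitions of $k:=|A|$ (forgetting the staircase bound only lowers the minimum). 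With $T:=\min\{t:\binom{t+1}{2}\ge k\}$, the bound $\lambda_0+\ell-s\ge T$ is trivial when $\lambda_0\ge T$; and if $\lambda_0=t<T$, grouping the parts by value, bounding the $s$ distinct values by $1,\dots,t$, and noting each excess multiplicity contributes at most $t$, gives $k\le\binom{t+1}{2}+t(\ell-s)$, which together with $\binom{t+1}{2}+t(T-t-1)=\tfrac12 t(2T-t-1)\le\binom{T}{2}<k$ (from $(T-t)(T-t-1)\ge0$) forces $\ell-s\ge T-t$. Equality holds for $C$, whose parts are distinct (the nonzero rows of a truncated staircase, so $s=\ell$ and $\lambda_0=T$), giving $|N(C)|=k+1+T$ and finishing part~(i).

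For part~(ii), if $A$ contains every vertex of the diagonal $|x|=n$, apply $C_1^+$ and $C_2^+$ instead (these preserve ``containing the diagonal''); the resulting $B$ is \emph{up-right-closed}, equivalently $\overline B:=V(T_n)\setminus B$ is down-left-closed and disjoint from the diagonal, say with partition $\mu$. Now $\partial_{T_n}(B)$ is precisely the set of vertices of $\overline B$ having a neighbour in $B$ — the inner boundary of $\overline B$ — and the same row-by-row count gives $|\partial_{T_n}(B)|=\mu_0+(\ell-s)$, so $|N(B)|=|A|+\mu_0+(\ell-s)$. Minimising $|N(A)|\;(\ge|N(B)|)$ again reduces to minimising $\mu_0+\ell-s$, now over partitions of $|\overline B|=|V(T_n)|-|A|$, which by the same estimate is smallest when $\overline B$ is the initial segment of that size, i.e.\ when $B$ (and hence the extremal configuration) is the final segment $D$.

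I expect the main obstacle to be the boundary formula: carrying out the row-by-row case analysis of which vertices lie in the outer boundary of a down-left-closed $B$ (resp.\ the inner boundary of $\overline B$) without slip-ups near the triangle's three sides, and then verifying that the minimisers of $\lambda_0+\ell-s$ are exactly the simplicial initial/final segments. Given the formula, the compression reduction and the arithmetic optimisation are both short.
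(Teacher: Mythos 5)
Your proposal is correct, and after the common compression step it diverges genuinely from the paper's argument. The paper also compresses to an extremal set $B$ that is $i$-left-compressed (resp.\ $i$-right-compressed) for all $i$, but then finishes with an \emph{exchange} argument: setting $r=\min\{|x|:x\notin B\}$ and $s=\max\{|x|:x\in B\}$, it shows that $r<s$ would allow moving a vertex from the top occupied diagonal to the lowest incomplete one without increasing $|N(B)|$, contradicting extremality, so $B$ is squeezed between consecutive initial segments. You instead compute the boundary of \emph{every} down-left-closed staircase exactly, $|\partial_{T_n}(B)|=1+\lambda_0+(\ell-s)$ (and $\mu_0+(\ell-s)$ for the inner-boundary version in part~(ii)); I checked the row-by-row case analysis, including the existence of the boundary vertices near the sides (which is exactly where the hypotheses $\lambda_j\le n-j$, resp.\ $\overline B$ avoiding the diagonal, are used), and the formulas are right, as is the telescoping. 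Your optimisation $\lambda_0+\ell-s\ge T$ via $k\le\binom{t+1}{2}+t(\ell-s)$ and $\binom{t+1}{2}+t(T-t-1)=\binom{T}{2}-\binom{T-t}{2}\le\binom{T}{2}<k$ is also correct, and the initial segment does realise $T$ with all parts distinct. What your route buys is an explicit boundary formula for all compressed sets (strictly more information than the lemma asks for, and it reproves Lemma~\ref{initialandfinal} along the way); what it costs is the delicate neighbour case analysis and an extra arithmetic minimisation over partitions, where the paper's swap argument needs only a local count of how the neighbourhood changes under one exchange.
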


\begin{proof}
First, we will prove ($i$). Among all $B \subseteq V(T_n)$ satisfying 
$|B| = |A|, |N(B)| \le |N(A)|$ and the diagonal $|x| = n$ has no member of $B$, choose $B$ such that the sum of all positions in the simplicial ordering of all the vertices in $B$ is minimum. We must get that $B$ is $i$-left-compressed for all $i$, otherwise $C_i^{-}(B)$ would satisfy all of the previous conditions by Lemma~\ref{neighborhood} and would also contradict the minimality of $B$. 
It suffices to show that $|N(B)| \ge |N(C)|$.

Let $r = \min\{|x| : x \notin B\}$ and 
$s = \max\{|x| : x \in B\}$.
If $r > s$, then $B = \{x : |x| \le s\} = C$ which implies that $|N(B)| \ge |N(C)|$. If $r = s$, then $\left\{x : |x| < r\right\} \subset B \subset \left\{x : |x| \le r\right\}$, so certainly $|N(B)| \ge |N(C)|$. 

Assume $r < s$ as shown in Figure~\ref{fig: r<s diagram}. Since $B$ is $i$-left-compressed for all $i$, then $\{x : |x| = s\} \not\subset B$, there exists vertices $x \in B$ and $x{'} \notin B$ such that $|x| = |x{'}| = s$ and $x$, $x{'}$ are adjacent. Likewise, there exists vertices $y \in B$ and $y^{'} \notin B$ such that $|y| = |y{'}| = r$ and $y$, $y{'}$ are adjacent. Set $B{'} = B \cup 
\{y{'}\} \setminus \{x\}$. It can be seen that $|N(B{'})| \le |N(B)|$ since removing $x$ from $B$ decreases the neighborhood size by at least $1$, while adding $y{'}$ into $B$ increases the neighborhood size by at most $1$. Indeed, the common neighbor of $x$ and $x{'}$ in the diagonal $s+1$ will certainly be removed from the neighborhood of $B$ since $s+1 \le n$. On the other hand, the neighbor of $y{'}$ on the diagonal $r+1$ which is not adjacent to $y$ is the only vertex that may be added into the original $N(B)$. Evidently, the diagonal $|x| = n$ does not have a member of $B{'}$ and the sum of all the positions in the simplicial ordering of all the vertices in $B{'}$ is less than that of $B$. This is a contradiction with the minimality of $B$. Hence, the case $r < s$ cannot happen.
Thus, $|N(A)| \ge |N(B{'})| \ge |N(C)|$.


Next, we will prove ($ii$). Among all $B \subseteq V(T_n)$ satisfying 
$|B| = |A|, |N(B)| \le |N(A)|$ and all vertices on the diagonal $|x| = n$ are members of $B$, choose $B$ such that the sum of all positions in the simplicial ordering of all the vertices in $B$ is maximum. We must get that $B$ is $i$-right-compressed for all $i$, otherwise $C_i^{+}(B)$ would satisfy all of the previous conditions by
Lemma~\ref{neighborhood} and would also contradict the maximality of $B$. 
It suffices to show that $|N(B)| \ge |N(D)|$.

Let $r =$ max$\{|x| : x \notin B\}$ and $s =$ min$\{|x| : x \in B\}$. If $r < s$, then $B = \left\{x : |x| \ge r\right\} = D$ which implies that $|N(B)| \ge |N(D)|$. If $r = s$, then $\{x : |x| > r\} \subset B \subset \{x : |x| \ge r\}$, so certainly $|N(B)| \ge |N(D)|$.

Assume $r > s$ as shown in Figure~\ref{fig: r>s diagram}. Since $B$ is $i$-right-compressed for all $i$, we get that $\{x : |x| = s\} \not\subset B$. Thus, there exists vertices $x \in B$ and $x{'} \notin B$ such that $|x| = |x{'}| = s$ and $x$, $x{'}$ are adjacent. Likewise, there exists vertices 
$y \in B$ and $y{'} \notin B$ such that 
$|y| = |y{'}| = r$ and $y$, $y{'}$ are adjacent.
Set $B{'} = B \cup \{y{'}\} \setminus \{x\}$.
It can be seen that $|N(B{'})| \le |N(B)|$ since removing $x$ from $B$ decreases the neighborhood size by at least 1, while adding $y{'}$ into $B$ increases the neighborhood size by at most 1. Indeed, the common neighbor of $x$ and $x{'}$ on the diagonal $s-1$ will certainly be removed from the neighborhood of $B$. On the other hand, the neighbor of $y{'}$ on the diagonal $r-1$ which is not adjacent to $y$ is the only vertex that may be added into the original $N(B)$ since $r-1 \le n-2$.
Evidently, all the vertices on the diagonal $|x| = n$ are members of $B{'}$ and the sum of all the positions in the simplicial ordering of all the vertices in $B{'}$ is greater than that of $B$. This is a contradiction with the maximality of $B$. Hence, the case $r > s$ cannot happen. Thus, $|N(A)| \ge |N(B)| \ge |N(D)|$. \qedhere

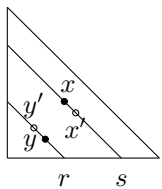
\begin{figure}[h]
    \centering
    \begin{tikzpicture}
        
        \draw (0,0) -- (0,2);
        \draw (0,0) -- (2,0);
        \draw (0,2) -- (2,0);
        \draw (0.75,0) -- (0,0.75);
        \draw (1.5,0) -- (0,1.5);

        \node at (0.75,-0.3) {$r$};
        \node at (1.5,-0.3) {$s$};

        \filldraw (0.5,0.25) circle(0.04cm);
        \draw (0.35,0.40) circle(0.04cm);
        \filldraw (0.75,0.75) circle(0.04cm);
        \draw (0.9,0.6) circle(0.04cm);

        \node at (0.30,0.20) {$y$};
        \node at (0.35,0.70) {$y'$};
        \node at (0.8,0.95) {$x$};
        \node at (0.9,0.4) {$x'$};

    \end{tikzpicture}
    \caption{A diagram of the case $r<s$ for ($i$).}
    \label{fig: r<s diagram}
\end{figure}

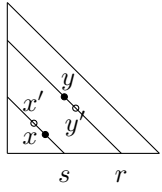
\begin{figure}[h]
    \centering
    \begin{tikzpicture}
        
        \draw (0,0) -- (0,2);
        \draw (0,0) -- (2,0);
        \draw (0,2) -- (2,0);
        \draw (0.75,0) -- (0,0.75);
        \draw (1.5,0) -- (0,1.5);

        \node at (0.75,-0.3) {$s$};
        \node at (1.5,-0.3) {$r$};

        \filldraw (0.5,0.25) circle(0.04cm);
        \draw (0.35,0.40) circle(0.04cm);
        \filldraw (0.75,0.75) circle(0.04cm);
        \draw (0.9,0.6) circle(0.04cm);

        \node at (0.30,0.20) {$x$};
        \node at (0.35,0.70) {$x'$};
        \node at (0.8,0.95) {$y$};
        \node at (0.9,0.4) {$y'$};

    \end{tikzpicture}
    \caption{A diagram of the case $r>s$ for ($ii$).}
    \label{fig: r>s diagram}
\end{figure}

\end{proof}

\begin{lemma}
\begin{enumerate}

\item If 
\[1+2+3+\dots+l < |C| \le 1+2+3+\dots+(l+1)\] where $C$ is the intitial segment of the simplicial ordering,
then $|\partial_{T_n}(C)| = l+2$.
\item If 
\[(n+1)+n+(n-1)+\dots+(l+1) \le |D| < (n+1)+n+(n-1)+\dots+l\]
where $D$ is the final segment of the simplicial ordering, then $|\partial_{T_n}(D)| = l$.
\end{enumerate}
\label{initialandfinal}
\end{lemma}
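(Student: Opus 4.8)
The plan is a direct computation along the diagonals. Recall that the diagonal $\{x:|x|=d\}$ consists of the $d+1$ vertices $(d,0),(d-1,1),\dots,(0,d)$, that the simplicial ordering fills the diagonals in increasing order of $d$ and, within a diagonal, in decreasing order of first coordinate, and that every vertex $v$ has its (at most six) neighbours on the diagonals $|x|=|v|-1,\,|v|,\,|v|+1$. Consequently, for an initial segment the boundary can only meet the lowest non‑full diagonal and the one immediately above it, and dually for a final segment. So in each part the proof splits into: identify those two diagonals, list exactly which of their vertices lie in the boundary, and add up.

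For the first part, write $|C|=(1+2+\cdots+l)+m$ with $1\le m\le l+1$; then $C$ is the union of the diagonals $|x|\le l-1$ together with the $m$ vertices of diagonal $|x|=l$ having first coordinate $\ge l-m+1$. Here I will assume $l\le n-1$ so that diagonal $l+1$ is non‑empty (equivalently $C\ne V(T_n)$; this is automatic in the application, since Lemma~\ref{emptyfulldiagonal}($i$) concerns sets disjoint from the top diagonal, and for $l=n$ the stated value is simply false). Since diagonals $0,\dots,l-1$ are full and contained in $C$, the boundary $\partial_{T_n}(C)$ lies on diagonals $l$ and $l+1$. On diagonal $l$ the $l+1-m$ vertices outside $C$ each have a neighbour on the full diagonal $l-1$, so all of them are boundary vertices. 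On diagonal $l+1$ one checks that $(l+1-k,k)$ is adjacent to $C$ exactly for $0\le k\le m$: its neighbour $(l+1-k,k-1)$ — or $(l,0)$ when $k=0$ — then lies among the first $m$ vertices of diagonal $l$, whereas for $k>m$ both diagonal‑$l$ neighbours have first coordinate $<l-m+1$ and hence lie outside $C$. This contributes $m+1$ vertices, so $|\partial_{T_n}(C)|=(l+1-m)+(m+1)=l+2$.

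The second part is symmetric. Write $|D|=\big((n+1)+n+\cdots+(l+1)\big)+m'$ with $0\le m'\le l-1$; then $D$ is the union of the diagonals $|x|\ge l$ together with the $m'$ vertices of diagonal $|x|=l-1$ having first coordinate $\le m'-1$. With the convention that negative diagonals are empty (which covers $l=0,1$), $\partial_{T_n}(D)$ lies on diagonals $l-1$ and $l-2$. On diagonal $l-1$ the $l-m'$ vertices outside $D$ each have a neighbour on the full diagonal $l$, giving $l-m'$ boundary vertices. On diagonal $l-2$, the vertex $(l-2-k,k)$ is adjacent to $D$ exactly when $k\ge l-1-m'$, i.e.\ when one of its diagonal‑$(l-1)$ neighbours $(l-1-k,k)$, $(l-2-k,k+1)$ has first coordinate $\le m'-1$; this contributes $m'$ vertices. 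Hence $|\partial_{T_n}(D)|=(l-m')+m'=l$.

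The only delicate part is the bookkeeping, not the ideas: one must check adjacencies at the three corners of the triangle, where vertices have fewer than six neighbours, and verify the degenerate sub‑cases ($m=l+1$, where diagonal $l$ is full and $\partial_{T_n}(C)$ is all of diagonal $l+1$; $m'=0$, where $\partial_{T_n}(D)$ is all of diagonal $l-1$; and the extreme values of $l$). A convenient way to see that the count does not depend on $m$ or $m'$, and to shorten the case analysis, is a sliding observation: moving the next vertex of diagonal $l$ into $C$ deletes exactly one vertex from the diagonal‑$l$ part of the boundary and inserts exactly one into the diagonal‑$(l+1)$ part, so $|\partial_{T_n}(C)|$ is constant and may be evaluated at $m=l+1$, and likewise for $D$. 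I expect the main thing to get right is the completeness of the list of boundary vertices on each relevant diagonal — in particular not overlooking the small extra boundary on diagonal $l-2$ for $D$, or the corner vertex $(l+1,0)$ for $C$.
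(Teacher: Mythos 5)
Your proof is correct, and it is the same kind of argument as the paper's --- an explicit enumeration of the boundary of the initial (resp.\ final) segment --- but with a different bookkeeping. You localize the boundary to the two relevant diagonals and count $(l+1-m)+(m+1)$ for part ($i$) and $(l-m')+m'$ for part ($ii$), with the dependence on the residue $m$ (or $m'$) cancelling; the paper instead counts one boundary vertex per column --- one in each of columns $0,\dots,l$ plus the single vertex $(l+1,0)$ in row $0$ for part ($i$), and one in each of columns $0,\dots,l-1$ for part ($ii$) --- which makes the total manifestly independent of $m$ and shortens the case analysis at the cost of a slightly less transparent ``exactly one per column'' claim. Your explicit flagging of the hypothesis $l\le n-1$ in part ($i$) is a genuine improvement in precision: the paper only assumes this implicitly when it asserts that the diagonal $|x|=n$ has no member of $C$, and as you note the stated value fails for $l=n$. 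Both arguments check the same degenerate cases (full top diagonal, $m'=0$, small $l$), and both are complete.
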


\begin{proof}
First, we will prove ($i$). 
Assume that 
\[1+2+3+\dots+l < |C| \le 1+2+3+\dots+(l+1).\]
Since $C$ is the initial segment of the simplicial ordering, we get that all columns from column $0$ to column $l$ must all contain a vertex that is a member of $C$. Moreover, the diagonal $|x| = n$ has no member of $C$. Thus, the vertex on top of the highest member of $C$ in each column must be in the vertex boundary of $C$. This implies that column $0$ to column $l$ has precisely $l+1$ vertices in the vertex boundary of $C$. Moreover, the vertex that is to the right of the rightmost member of $C$ in row $0$ must also be in the vertex boundary of $C$. Thus, $|\partial_{T_n}(C)| = l+2$.

Next, we will prove ($ii$).
Assume that 
\[(n+1)+n+(n-1)+\dots+(l+1) \le |D| < (n+1)+n+(n-1)+\dots+l.\] 
Since $D$ is the final segment of the simplicial ordering, the vertices in $D$ must occupy column $0$ to column $l-1$, and each column must have a vertex that is not a member of $D$. This implies that every column from column $0$ to column $l-1$ must have precisely 1 boundary vertex of $D$. Moreover, column $l$ to column $n$ must not have any boundary vertex of $D$ because all the vertices in these columns are members of $D$. Therefore, $|\partial_{T_n}(D)| = l$. \qedhere
\end{proof}

Next we will begin the proof of Theorem~\ref{thm:inequality}.

\begin{proof}[Proof of Theorem~\ref{thm:inequality}.]
Consider a triangular grid graph $T_n$. Let $j$ be the largest positive integer satisfying $1+2+3+\dots+j < \frac{|T_n|}{2}$, and let $A \subseteq V(T_n)$. It is sufficient to show that the neighborhood of $A$ is at least as large as of that of either the row packing or ice cream cone packing of size $|A|$. We will separate the proof into two cases.

\textbf{Case 1.} $|A| \le 1+2+3+\dots+j$.

Among all $B$ satisfying $|B| = |A|$ and $|N(B)| \le |N(A)|$, choose $B$ such that the sum of all the positions in simplicial of all the vertices in $B$ is minimal. 
Then $B$ must be $i$-left-compressed for all $i$, otherwise $C_i^{-}(B)$ would satisfy all of the previous conditions by Lemma~\ref{neighborhood} and would also contradict the minimality of $B$.

Next, we will show that 
$\left| N \left(B\right) \right| \ge |N(C)|$ where $C$ is the ice cream cone packing of size $|A|$. Clearly, if we view the top vertex in Figure~\ref{fig:triangulargraph} as the origin in Figure~\ref{fig:shiftedtriangulargraph}, then $C$ is exactly the initial segment of the simplicial ordering of size $|A|$. We will separate the proof into two further subcases.

\textbf{Case 1.1.} The diagonal $|x| = n$ has no member of $B$.

This case is done by Lemma~\ref{emptyfulldiagonal}.

\textbf{Case 1.2.} The diagonal $|x| = n$ has a member of $B$.

Since $|B| = |C|$, it is sufficient to show that $|\partial_{T_n}(B)| \ge |\partial_{T_n}(C)|$. First, we will determine a lower bound for $|\partial_{T_n}(B)|$.
Assume that there are $k$ members of $B$ on the diagonal $|x| = n$.
Among all vertices that are a member of $B$ and on the diagonal $|x| = n$, choose the vertex $y = (y_1,y_2)$ with the highest value of the second coordinate. Since $B$ is $i$-left-compressed for $i = 1,2$, then all vertices $z = (z_1,z_2)$ with $z_1 \le y_1$ and $z_2 \le y_2$ are members of $B$.

Since column $0$ to column $y_1-1$ all have members of $B$ and all have a vertex which is not a member of $B$, we get that each column must have at least one vertex that is in the vertex boundary of $B$. On the other hand, since there are $y_2-(k-1)$ rows between row $0$ and row $y_2-1$ that have members of $B$ and also have a vertex which is not a member of $B$, we get that each of these $y_2-(k-1)$ rows must have at least one vertex that is in the vertex boundary of $B$. Hence, the vertex boundary of $B$ has size at least $y_1+(y_2-(k-1)) = n-k+1$. 

Since 
\[|C| = \left|B\right| = |A| \le 1+2+3+\dots+j < \frac{|T_n|}{2}\]
and by Lemma~\ref{initialandfinal},
we get that $|\partial_{T_n}(C)| \le j+1$. Since we aim to show that $|\partial_{T_n}(B)| \ge |\partial_{T_n}(C)|$, it suffices to show that $j+1 \le n-k+1$.

Let $x_1,x_2,\dots,x_k$ represent the $k$ members of $B$ on the diagonal $|x| = n$ where the second coordinates of $x_1,x_2,\dots,x_k$ are ordered in decreasing order. Define the broken line $l_i$ as the broken line that is drawn from $x_i$ straight to the left until column 
$i-1$, then straight downward to row $0$. Clearly, there are $n+2-i$ vertices on line $l_i$ and no two lines $l_i$ intersect each other. Moreover, since $B$ is $i$-left-compressed for $i = 1,2$, we get that all the vertices on every broken line $l_i$ is a member of $B$. 
Thus, 
\[\left|B\right| \ge (n+1)+n+(n-1)+\dots+(n-(k-1)).\] 
However, 
\[(n+1)+n+(n-1)+\dots+(j+1) > \frac{|T_n|}{2} > 1+2+3+\dots+j \ge |B|.\]
Hence, $j+1 < n-k+1$ as desired.

From the two subcases, we get that $\left|N\left(B\right)\right| \ge |N(C)|$.

\textbf{Case 2.} $|A| > 1+2+3+\dots+j$.

Among all $B$ satisfying $|B| = |A|$ and $|N(B)| \le |N(A)|$, choose $B$ such that the sum of all the positions in the simplicial ordering of all the vertices in $B$ is maximum. Then $B$ must be $i$-right-compressed for all $i$, otherwise $C_i^{+}(B)$ would satisfy all of the previous conditions by Lemma~\ref{neighborhood} and would also contradict the maximality of $B$.

Next, we will show that $\left|N\left(B\right)\right| \ge |N(D)|$ where $D$ is the row packing of size $|A|$. Clearly, if we view the top vertex in Figure~\ref{fig:triangulargraph} as the origin in Figure~\ref{fig:shiftedtriangulargraph}, then $D$ is exactly the final segment of the simplicial ordering of size $|A|$. We will now separate two further subcases through this knowledge.

\textbf{Case 2.1.} All vertices on the diagonal $|x| = n$ are members of $B$.


This case is done by Lemma~\ref{emptyfulldiagonal}.

\textbf{Case 2.2.} There is a vertex on the diagonal $|x| = n$ that is not a member of $B$.

Since $|B| = |D|$, it is sufficient to show that $|\partial_{T_n}(B)| \ge |\partial_{T_n}(D)|$. First, we will determine a lower bound for $|\partial_{T_n}(B)|$.
Let $y = (y_1,y_2)$ be a vertex on the diagonal $|x| = n$ that is not a member of $B$. Since $B$ is $i$-right-compressed for $i = 1,2$, we get that all points $z = (z_1,z_2)$ with $z_1 \le y_1$ and $z_2 \le y_2$ are not members of $B$. 

We shall denote the subgraph induced by row $y_2$ to row $n$ by $G_1$, and the subgraph induced by column $y_1$ to column $n$ by $G_2$ as shown in Figure~\ref{fig: P-subgraph}. Write $B_i = V(G_i) \cap B$ for $i = 1,2$. 
Suppose that
\[1+2+3+\dots+l < |B_1| \le 1+2+3+\dots+(l+1)\] 
and 
\[1+2+3+\dots+m < |B_2| \le 1+2+3+\dots+(m+1).\]

\begin{figure}[h]
    \centering
    \begin{tikzpicture}

        \filldraw (0,0) circle(0.1cm);
	\filldraw (0,2) circle(0.1cm);
	\filldraw (2,0) circle(0.1cm);
        
        \draw (0,0) -- (0,2);
        \draw (0,0) -- (2,0);
        \draw (0,2) -- (2,0);
        \draw (1,0) -- (1,1);
        \draw (0,1) -- (1,1);

        \node at (0.5,1.2) {$G_1$};
        \node at (1.5,0.2) {$G_2$};
        \node at (-0.7,1) {row $y_2$};
        \node at (1,-0.3) {column $y_1$};
          
    \end{tikzpicture}
    \caption{A diagram of the subgraphs $G_1$ and $G_2$}
    \label{fig: P-subgraph}
\end{figure}
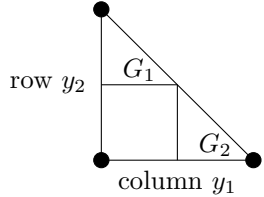

For the subgraph $G_1$ which is a triangular grid graph, view the vertex in row $n$ as the origin in the form of Figure~\ref{fig:shiftedtriangulargraph}. From this point of view, the final diagonal of $G_1$ has no member of $B_1$. Hence, Lemma~\ref{emptyfulldiagonal} can be applied to $B_1$ with respect to $G_1$. Similarly, Lemma~\ref{emptyfulldiagonal} can also be applied to $B_2$ with respect to $G_2$. By Lemmas~\ref{emptyfulldiagonal} and~\ref{initialandfinal}, we have
\[|\partial_{T_n}(B_1)| = |\partial_{G_1}(B_1)| \ge l+2\] 
and 
\[|\partial_{T_n}(B_2)| = |\partial_{G_2}(B_2)| \ge m+2.\] 
Due to the fact that $|V(G_1) \cap V(G_2)| = 1$,
we deduce that 
\[|\partial_{T_n}(B)| \ge |\partial_{T_n}(B_1)| + |\partial_{T_n}(B_2)| -1 \ge m+l+3.\]
Next, we will show that $m+l+3 \ge j+2$. Suppose for contradiction that $m+l+2 \le j$.
Since $|B| = |V(G_1) \cap B| + |V(G_2) \cap B|$, we get that 
\[|B| \le \sum_{i=1}^{l+1} i + \sum_{i=1}^{m+1} i.\]
On the other hand, 
\[\sum_{i=1}^{l+1} i + \sum_{i=1}^{m+1} i < \sum_{i=1}^{l+1} i + \sum_{i=l+2}^{m+l+2} i \le 1+2+3+\dots+j < \left|B\right|\] 
which is a contradiction. Therefore, $|\partial_{T_n}(B)| \ge m+l+3 \ge j+2$.

Next, we will show that $|\partial_{T_n}(D)| \le j+2$. Since $D$ is a final segment of the simplicial ordering and by Lemma~\ref{initialandfinal}, it is sufficient to prove that $|D| \ge (n+1)+n+(n-1)+\dots+(j+3)$. From
$1+2+3+\dots+(j+1) \ge \frac{|T_n|}{2}$ and $|T_n| = 1+2+3+\dots+(n+1)$, 
we get that 
\[(n+1)+n+(n-1)+\dots+(j+2) \le \frac{|T_n|}{2}\]
Thus, 
\[(n+1)+n+(n-1)+\dots+(j+3) \le \frac{|T_n|}{2}-(j+2) < \frac{|T_n|}{2}-(j+1) \le 1+2+3+\dots+j < |B{'}| = |E|.\] 
Hence, $|\partial_{T_n}(E)| \le j+2$.

From the two subcases, we get that $\left|N\left(B\right)\right| \ge |N(E)|$.

Combining the results of Case 1 and Case 2, we prove that the beighborhood of $A$ is at least as large as the minimum of that of the row packing and ice cream cone packing of size $|A|$.
\end{proof}

\section{Proof of Theorem~\ref{thm:search}}
\label{sec:thm:search}

In this section we will provide lower and upper bounds for the inspection number of the triangular grid graph.

\begin{proof}[Proof of  Theorem~\ref{thm:search}]

First, we will show that 
$\In(T_n) \le \left\lceil \frac{3n}{4} \right\rceil + 2$ by demonstrating a sequence of searches on at most 
$k = \left\lceil \frac{3n}{4} \right\rceil + 2$ vertices
that clears the intruder from $T_n$. After a search, we say that a vertex is $\emph{fully cleared}$ if the intruder cannot move to the vertex immediately after the search.

We will separate the sequence of searches into three stages. In the first stage, we will ensure that every vertex in row $n$ to row $n-k+3$ is fully cleared. First, we examine the vertex in row $n$ and the two vertices in row $n-1$. After this search, the vertex in row $n$ becomes fully cleared. 
Once we have made every vertex in row $n$ to row $n-i$ fully cleared, we will make every vertex in row $n-i-1$ fully cleared. Let the vertices in row $n-i-1$ be denoted by $v_1,v_2,\dots,v_{i+2}$ from left to right.
Similarly, denote the vertices in row $n-i-2$ by $v_{i+3},v_{i+4},\dots,v_{2i+5}$ from left to right.
We start the phase of fully clearing the vertices in row $n-i-1$ by examining $v_1,v_2,\dots,v_{i+4}$. This search will make $v_1$ become fully cleared. 
In the $j$-th turn of this phase for 
$j \in \{1,2,\dots,i+2\}$, we examine $v_j,v_{j+1},\dots,v_{j+i+3}$. 
Clearly, the vertices $v_1,v_2,\dots,v_j$ will be fully cleared after the $j$-th search.
Thus, all the vertices in row $n-i-1$ will become fully cleared after $i+2$ turns. Evidently, the final row that can become fully cleared using this strategy is row $n-k+3$ since row 
$n-k+3$ has precisely $k-2$ vertices. Figure~\ref{fig:firststep} demonstrates the first stage of fully clearing $T_5$.

\begin{figure}[h]
	\centering
        \scalebox{0.55}{
	\begin{tikzpicture}
		\draw[fill = yellow] (0,5) circle(0.1cm);
		\draw[fill = yellow] (0,4) circle(0.1cm);
		\draw[fill = yellow] (1,4) circle(0.1cm);
		\draw[fill = red] (0,3) circle(0.1cm);
		\draw[fill = red] (1,3) circle(0.1cm);
            \draw[fill = red] (2,3) circle(0.1cm);
            \draw[fill = red] (0,2) circle(0.1cm);
            \draw[fill = red] (1,2) circle(0.1cm);
            \draw[fill = red] (2,2) circle(0.1cm);
            \draw[fill = red] (3,2) circle(0.1cm);
            \draw[fill = red] (0,1) circle(0.1cm);
		\draw[fill = red] (1,1) circle(0.1cm);
		\draw[fill = red] (2,1) circle(0.1cm);
		\draw[fill = red] (3,1) circle(0.1cm);
		\draw[fill = red] (4,1) circle(0.1cm);
            \draw[fill = red] (0,0) circle(0.1cm);
            \draw[fill = red] (1,0) circle(0.1cm);
            \draw[fill = red] (2,0) circle(0.1cm);
            \draw[fill = red] (3,0) circle(0.1cm);
            \draw[fill = red] (4,0) circle(0.1cm);
            \draw[fill = red] (5,0) circle(0.1cm);
           
            \draw (0,0) -- (5,0);
		\draw (0,0) -- (0,5);
		\draw (0,1) -- (1,0);
		\draw (0,2) -- (2,0);
            \draw (0,3) -- (3,0);
            \draw (0,4) -- (4,0);
            \draw (0,5) -- (5,0);
            \draw (0,4) -- (1,4);
            \draw (0,3) -- (2,3);
            \draw (0,2) -- (3,2);
		\draw (0,1) -- (4,1);
		\draw (1,0) -- (1,4);
		\draw (2,0) -- (2,3);
            \draw (3,0) -- (3,2);
            \draw (4,0) -- (4,1);

            \draw[fill = green] (6,5) circle(0.1cm);
		\draw[fill = yellow] (6,4) circle(0.1cm);
		\draw[fill = yellow] (7,4) circle(0.1cm);
		\draw[fill = yellow] (6,3) circle(0.1cm);
		\draw[fill = yellow] (7,3) circle(0.1cm);
            \draw[fill = red] (8,3) circle(0.1cm);
            \draw[fill = red] (6,2) circle(0.1cm);
            \draw[fill = red] (7,2) circle(0.1cm);
            \draw[fill = red] (8,2) circle(0.1cm);
            \draw[fill = red] (9,2) circle(0.1cm);
            \draw[fill = red] (6,1) circle(0.1cm);
		\draw[fill = red] (7,1) circle(0.1cm);
		\draw[fill = red] (8,1) circle(0.1cm);
		\draw[fill = red] (9,1) circle(0.1cm);
		\draw[fill = red] (10,1) circle(0.1cm);
            \draw[fill = red] (6,0) circle(0.1cm);
            \draw[fill = red] (7,0) circle(0.1cm);
            \draw[fill = red] (8,0) circle(0.1cm);
            \draw[fill = red] (9,0) circle(0.1cm);
            \draw[fill = red] (10,0) circle(0.1cm);
            \draw[fill = red] (11,0) circle(0.1cm);
  
		\draw (6,0) -- (11,0);
		\draw (6,0) -- (6,5);
		\draw (6,1) -- (7,0);
		\draw (6,2) -- (8,0);
            \draw (6,3) -- (9,0);
            \draw (6,4) -- (10,0);
            \draw (6,5) -- (11,0);
            \draw (6,4) -- (7,4);
            \draw (6,3) -- (8,3);
            \draw (6,2) -- (9,2);
		\draw (6,1) -- (10,1);
		\draw (7,0) -- (7,4);
		\draw (8,0) -- (8,3);
            \draw (9,0) -- (9,2);
            \draw (10,0) -- (10,1);

            \draw[fill = green] (12,5) circle(0.1cm);
		\draw[fill = green] (12,4) circle(0.1cm);
		\draw[fill = yellow] (13,4) circle(0.1cm);
		\draw[fill = yellow] (12,3) circle(0.1cm);
		\draw[fill = yellow] (13,3) circle(0.1cm);
            \draw[fill = yellow] (14,3) circle(0.1cm);
            \draw[fill = red] (12,2) circle(0.1cm);
            \draw[fill = red] (13,2) circle(0.1cm);
            \draw[fill = red] (14,2) circle(0.1cm);
            \draw[fill = red] (15,2) circle(0.1cm);
            \draw[fill = red] (12,1) circle(0.1cm);
		\draw[fill = red] (13,1) circle(0.1cm);
		\draw[fill = red] (14,1) circle(0.1cm);
		\draw[fill = red] (15,1) circle(0.1cm);
		\draw[fill = red] (16,1) circle(0.1cm);
            \draw[fill = red] (12,0) circle(0.1cm);
            \draw[fill = red] (13,0) circle(0.1cm);
            \draw[fill = red] (14,0) circle(0.1cm);
            \draw[fill = red] (15,0) circle(0.1cm);
            \draw[fill = red] (16,0) circle(0.1cm);
            \draw[fill = red] (17,0) circle(0.1cm);
  
		\draw (12,0) -- (17,0);
		\draw (12,0) -- (12,5);
		\draw (12,1) -- (13,0);
		\draw (12,2) -- (14,0);
            \draw (12,3) -- (15,0);
            \draw (12,4) -- (16,0);
            \draw (12,5) -- (17,0);
            \draw (12,4) -- (13,4);
            \draw (12,3) -- (14,3);
            \draw (12,2) -- (15,2);
		\draw (12,1) -- (16,1);
		\draw (13,0) -- (13,4);
		\draw (14,0) -- (14,3);
            \draw (15,0) -- (15,2);
            \draw (16,0) -- (16,1);

            \draw[fill = green] (0,-1) circle(0.1cm);
		\draw[fill = green] (0,-2) circle(0.1cm);
		\draw[fill = green] (1,-2) circle(0.1cm);
		\draw[fill = yellow] (0,-3) circle(0.1cm);
		\draw[fill = yellow] (1,-3) circle(0.1cm);
            \draw[fill = yellow] (2,-3) circle(0.1cm);
            \draw[fill = yellow] (0,-4) circle(0.1cm);
            \draw[fill = yellow] (1,-4) circle(0.1cm);
            \draw[fill = red] (2,-4) circle(0.1cm);
            \draw[fill = red] (3,-4) circle(0.1cm);
            \draw[fill = red] (0,-5) circle(0.1cm);
		\draw[fill = red] (1,-5) circle(0.1cm);
		\draw[fill = red] (2,-5) circle(0.1cm);
		\draw[fill = red] (3,-5) circle(0.1cm);
		\draw[fill = red] (4,-5) circle(0.1cm);
            \draw[fill = red] (0,-6) circle(0.1cm);
            \draw[fill = red] (1,-6) circle(0.1cm);
            \draw[fill = red] (2,-6) circle(0.1cm);
            \draw[fill = red] (3,-6) circle(0.1cm);
            \draw[fill = red] (4,-6) circle(0.1cm);
            \draw[fill = red] (5,-6) circle(0.1cm);
           
            \draw (0,-6) -- (5,-6);
		\draw (0,-6) -- (0,-1);
		\draw (0,-5) -- (1,-6);
		\draw (0,-4) -- (2,-6);
            \draw (0,-3) -- (3,-6);
            \draw (0,-2) -- (4,-6);
            \draw (0,-1) -- (5,-6);
            \draw (0,-2) -- (1,-2);
            \draw (0,-3) -- (2,-3);
            \draw (0,-4) -- (3,-4);
		\draw (0,-5) -- (4,-5);
		\draw (1,-6) -- (1,-2);
		\draw (2,-6) -- (2,-3);
            \draw (3,-6) -- (3,-4);
            \draw (4,-6) -- (4,-5);

            \draw[fill = green] (6,-1) circle(0.1cm);
		\draw[fill = green] (6,-2) circle(0.1cm);
		\draw[fill = green] (7,-2) circle(0.1cm);
		\draw[fill = green] (6,-3) circle(0.1cm);
		\draw[fill = yellow] (7,-3) circle(0.1cm);
            \draw[fill = yellow] (8,-3) circle(0.1cm);
            \draw[fill = yellow] (6,-4) circle(0.1cm);
            \draw[fill = yellow] (7,-4) circle(0.1cm);
            \draw[fill = yellow] (8,-4) circle(0.1cm);
            \draw[fill = red] (9,-4) circle(0.1cm);
            \draw[fill = red] (6,-5) circle(0.1cm);
		\draw[fill = red] (7,-5) circle(0.1cm);
		\draw[fill = red] (8,-5) circle(0.1cm);
		\draw[fill = red] (9,-5) circle(0.1cm);
		\draw[fill = red] (10,-5) circle(0.1cm);
            \draw[fill = red] (6,-6) circle(0.1cm);
            \draw[fill = red] (7,-6) circle(0.1cm);
            \draw[fill = red] (8,-6) circle(0.1cm);
            \draw[fill = red] (9,-6) circle(0.1cm);
            \draw[fill = red] (10,-6) circle(0.1cm);
            \draw[fill = red] (11,-6) circle(0.1cm);
           
            \draw (6,-6) -- (11,-6);
		\draw (6,-6) -- (6,-1);
		\draw (6,-5) -- (7,-6);
		\draw (6,-4) -- (8,-6);
            \draw (6,-3) -- (9,-6);
            \draw (6,-2) -- (10,-6);
            \draw (6,-1) -- (11,-6);
            \draw (6,-2) -- (7,-2);
            \draw (6,-3) -- (8,-3);
            \draw (6,-4) -- (9,-4);
		\draw (6,-5) -- (10,-5);
		\draw (7,-6) -- (7,-2);
		\draw (8,-6) -- (8,-3);
            \draw (9,-6) -- (9,-4);
            \draw (10,-6) -- (10,-5);

            \draw[fill = green] (12,-1) circle(0.1cm);
		\draw[fill = green] (12,-2) circle(0.1cm);
		\draw[fill = green] (13,-2) circle(0.1cm);
		\draw[fill = green] (12,-3) circle(0.1cm);
		\draw[fill = green] (13,-3) circle(0.1cm);
            \draw[fill = yellow] (14,-3) circle(0.1cm);
            \draw[fill = yellow] (12,-4) circle(0.1cm);
            \draw[fill = yellow] (13,-4) circle(0.1cm);
            \draw[fill = yellow] (14,-4) circle(0.1cm);
            \draw[fill = yellow] (15,-4) circle(0.1cm);
            \draw[fill = red] (12,-5) circle(0.1cm);
		\draw[fill = red] (13,-5) circle(0.1cm);
		\draw[fill = red] (14,-5) circle(0.1cm);
		\draw[fill = red] (15,-5) circle(0.1cm);
		\draw[fill = red] (16,-5) circle(0.1cm);
            \draw[fill = red] (12,-6) circle(0.1cm);
            \draw[fill = red] (13,-6) circle(0.1cm);
            \draw[fill = red] (14,-6) circle(0.1cm);
            \draw[fill = red] (15,-6) circle(0.1cm);
            \draw[fill = red] (16,-6) circle(0.1cm);
            \draw[fill = red] (17,-6) circle(0.1cm);
           
            \draw (12,-6) -- (17,-6);
		\draw (12,-6) -- (12,-1);
		\draw (12,-5) -- (13,-6);
		\draw (12,-4) -- (14,-6);
            \draw (12,-3) -- (15,-6);
            \draw (12,-2) -- (16,-6);
            \draw (12,-1) -- (17,-6);
            \draw (12,-2) -- (13,-2);
            \draw (12,-3) -- (14,-3);
            \draw (12,-4) -- (15,-4);
		\draw (12,-5) -- (16,-5);
		\draw (13,-6) -- (13,-2);
		\draw (14,-6) -- (14,-3);
            \draw (15,-6) -- (15,-4);
            \draw (16,-6) -- (16,-5);

            \draw[fill = green] (0,-7) circle(0.1cm);
		\draw[fill = green] (0,-8) circle(0.1cm);
		\draw[fill = green] (1,-8) circle(0.1cm);
		\draw[fill = green] (0,-9) circle(0.1cm);
		\draw[fill = green] (1,-9) circle(0.1cm);
            \draw[fill = green] (2,-9) circle(0.1cm);
            \draw[fill = yellow] (0,-10) circle(0.1cm);
            \draw[fill = yellow] (1,-10) circle(0.1cm);
            \draw[fill = yellow] (2,-10) circle(0.1cm);
            \draw[fill = yellow] (3,-10) circle(0.1cm);
            \draw[fill = yellow] (0,-11) circle(0.1cm);
		\draw[fill = yellow] (1,-11) circle(0.1cm);
		\draw[fill = red] (2,-11) circle(0.1cm);
		\draw[fill = red] (3,-11) circle(0.1cm);
		\draw[fill = red] (4,-11) circle(0.1cm);
            \draw[fill = red] (0,-12) circle(0.1cm);
            \draw[fill = red] (1,-12) circle(0.1cm);
            \draw[fill = red] (2,-12) circle(0.1cm);
            \draw[fill = red] (3,-12) circle(0.1cm);
            \draw[fill = red] (4,-12) circle(0.1cm);
            \draw[fill = red] (5,-12) circle(0.1cm);
           
            \draw (0,-12) -- (5,-12);
		\draw (0,-12) -- (0,-7);
		\draw (0,-11) -- (1,-11);
		\draw (0,-10) -- (2,-12);
            \draw (0,-9) -- (3,-12);
            \draw (0,-8) -- (4,-12);
            \draw (0,-7) -- (5,-12);
            \draw (0,-8) -- (1,-8);
            \draw (0,-9) -- (2,-9);
            \draw (0,-10) -- (3,-10);
		\draw (0,-11) -- (4,-11);
		\draw (1,-12) -- (1,-8);
		\draw (2,-12) -- (2,-9);
            \draw (3,-12) -- (3,-10);
            \draw (4,-12) -- (4,-11);

            \draw[fill = green] (6,-7) circle(0.1cm);
		\draw[fill = green] (6,-8) circle(0.1cm);
		\draw[fill = green] (7,-8) circle(0.1cm);
		\draw[fill = green] (6,-9) circle(0.1cm);
		\draw[fill = green] (7,-9) circle(0.1cm);
            \draw[fill = green] (8,-9) circle(0.1cm);
            \draw[fill = green] (6,-10) circle(0.1cm);
            \draw[fill = yellow] (7,-10) circle(0.1cm);
            \draw[fill = yellow] (8,-10) circle(0.1cm);
            \draw[fill = yellow] (9,-10) circle(0.1cm);
            \draw[fill = yellow] (6,-11) circle(0.1cm);
		\draw[fill = yellow] (7,-11) circle(0.1cm);
		\draw[fill = yellow] (8,-11) circle(0.1cm);
		\draw[fill = red] (9,-11) circle(0.1cm);
		\draw[fill = red] (10,-11) circle(0.1cm);
            \draw[fill = red] (6,-12) circle(0.1cm);
            \draw[fill = red] (7,-12) circle(0.1cm);
            \draw[fill = red] (8,-12) circle(0.1cm);
            \draw[fill = red] (9,-12) circle(0.1cm);
            \draw[fill = red] (10,-12) circle(0.1cm);
            \draw[fill = red] (11,-12) circle(0.1cm);
           
            \draw (6,-12) -- (11,-12);
		\draw (6,-12) -- (6,-7);
		\draw (6,-11) -- (7,-11);
		\draw (6,-10) -- (8,-12);
            \draw (6,-9) -- (9,-12);
            \draw (6,-8) -- (10,-12);
            \draw (6,-7) -- (11,-12);
            \draw (6,-8) -- (7,-8);
            \draw (6,-9) -- (8,-9);
            \draw (6,-10) -- (9,-10);
		\draw (6,-11) -- (10,-11);
		\draw (7,-12) -- (7,-8);
		\draw (8,-12) -- (8,-9);
            \draw (9,-12) -- (9,-10);
            \draw (10,-12) -- (10,-11);

            \draw[fill = green] (12,-7) circle(0.1cm);
		\draw[fill = green] (12,-8) circle(0.1cm);
		\draw[fill = green] (13,-8) circle(0.1cm);
		\draw[fill = green] (12,-9) circle(0.1cm);
		\draw[fill = green] (13,-9) circle(0.1cm);
            \draw[fill = green] (14,-9) circle(0.1cm);
            \draw[fill = green] (12,-10) circle(0.1cm);
            \draw[fill = green] (13,-10) circle(0.1cm);
            \draw[fill = yellow] (14,-10) circle(0.1cm);
            \draw[fill = yellow] (15,-10) circle(0.1cm);
            \draw[fill = yellow] (12,-11) circle(0.1cm);
		\draw[fill = yellow] (13,-11) circle(0.1cm);
		\draw[fill = yellow] (14,-11) circle(0.1cm);
		\draw[fill = yellow] (15,-11) circle(0.1cm);
		\draw[fill = red] (16,-11) circle(0.1cm);
            \draw[fill = red] (12,-12) circle(0.1cm);
            \draw[fill = red] (13,-12) circle(0.1cm);
            \draw[fill = red] (14,-12) circle(0.1cm);
            \draw[fill = red] (15,-12) circle(0.1cm);
            \draw[fill = red] (16,-12) circle(0.1cm);
            \draw[fill = red] (17,-12) circle(0.1cm);
           
            \draw (12,-12) -- (17,-12);
		\draw (12,-12) -- (12,-7);
		\draw (12,-11) -- (13,-11);
		\draw (12,-10) -- (14,-12);
            \draw (12,-9) -- (15,-12);
            \draw (12,-8) -- (16,-12);
            \draw (12,-7) -- (17,-12);
            \draw (12,-8) -- (13,-8);
            \draw (12,-9) -- (14,-9);
            \draw (12,-10) -- (15,-10);
		\draw (12,-11) -- (16,-11);
		\draw (13,-12) -- (13,-8);
		\draw (14,-12) -- (14,-9);
            \draw (15,-12) -- (15,-10);
            \draw (16,-12) -- (16,-11);

            \draw[fill = green] (0,-13) circle(0.1cm);
		\draw[fill = green] (0,-14) circle(0.1cm);
		\draw[fill = green] (1,-14) circle(0.1cm);
		\draw[fill = green] (0,-15) circle(0.1cm);
		\draw[fill = green] (1,-15) circle(0.1cm);
            \draw[fill = green] (2,-15) circle(0.1cm);
            \draw[fill = green] (0,-16) circle(0.1cm);
            \draw[fill = green] (1,-16) circle(0.1cm);
            \draw[fill = green] (2,-16) circle(0.1cm);
            \draw[fill = yellow] (3,-16) circle(0.1cm);
            \draw[fill = yellow] (0,-17) circle(0.1cm);
		\draw[fill = yellow] (1,-17) circle(0.1cm);
		\draw[fill = yellow] (2,-17) circle(0.1cm);
		\draw[fill = yellow] (3,-17) circle(0.1cm);
		\draw[fill = yellow] (4,-17) circle(0.1cm);
            \draw[fill = red] (0,-18) circle(0.1cm);
            \draw[fill = red] (1,-18) circle(0.1cm);
            \draw[fill = red] (2,-18) circle(0.1cm);
            \draw[fill = red] (3,-18) circle(0.1cm);
            \draw[fill = red] (4,-18) circle(0.1cm);
            \draw[fill = red] (5,-18) circle(0.1cm);
           
            \draw (0,-18) -- (5,-18);
		\draw (0,-18) -- (0,-13);
		\draw (0,-17) -- (1,-17);
		\draw (0,-16) -- (2,-18);
            \draw (0,-15) -- (3,-18);
            \draw (0,-14) -- (4,-18);
            \draw (0,-13) -- (5,-18);
            \draw (0,-14) -- (1,-14);
            \draw (0,-15) -- (2,-15);
            \draw (0,-16) -- (3,-16);
		\draw (0,-17) -- (4,-17);
		\draw (1,-18) -- (1,-14);
		\draw (2,-18) -- (2,-15);
            \draw (3,-18) -- (3,-16);
            \draw (4,-18) -- (4,-17);

        \end{tikzpicture}
        }

        \caption{The first stage of fully clearing $T_5$. Green vertices represent vertices that are fully cleared. Yellow vertices represent vertices that are examined during each turn. Red vertices represent vertices that are neither fully cleared, nor being searched.}
	\label{fig:firststep}
\end{figure}
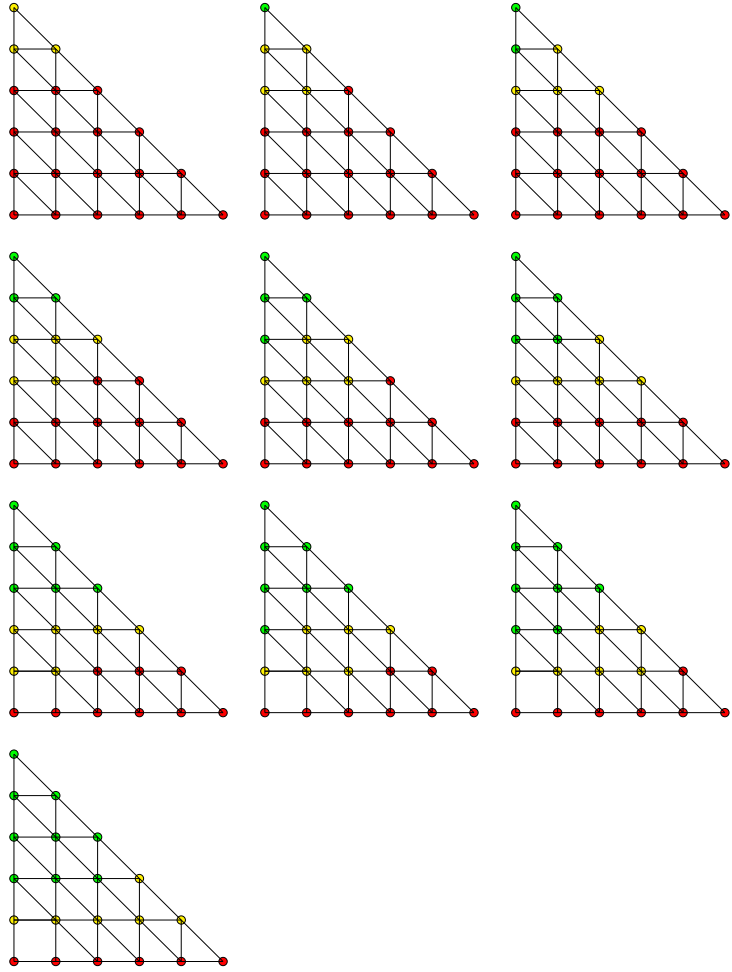

Next, we will demonstrate the second stage of our search strategy. Our goal is to fully clear every vertex between column $0$ and column $n-k+1$, but we allow the previously fully cleared vertices to the right of column 
$n-k+1$ to lose their state.
Let $R_1, R_2,\dots, R_{n-k+3}$ be the first $n-k+2$ vertices from the left of row $n-k+2$, row $n-k+1$,\dots, row $0$ respectively. Let $v_1, v_2,\dots, v_{n-k+2}$ be the $(2n-2k+5)$-th, $(2n-2k+4)$-th,\dots, $(n-k+4)$-th vertex of row $n-k+2$ respectively.
We also denote the bottom $n-k+3$ vertices of column $n-k+2$ by 
$v_{n-k+3},v_{n-k+4},\dots,
v_{2n-2k+5}$ from top to bottom.
On the first move of this stage, we examine $3n-3k+8 \le k$ vertices consisting of $v_1,v_2,\dots,v_{n-k+4}$
and every vertex in $R_1$ and $R_2$. Clearly, this move will make $R_1$ fully cleared, while the vertices above row $n-k+2$ and to the left of column $2n-2k+4$ remain fully cleared.
In the $j$-th turn of the second stage for $j = 1,2,\dots,n-k+2$, we examine $v_j,v_{j+1},\dots,
v_{j+n-k+3}$ and every vertex in $R_j$ and $R_{j+1}$. 
Clearly, every vertex in $R_1,R_2,\dots,R_j$ will be fully cleared after this search, while the vertices above row $n-k+2$ and to the left of column $2n-2k-j+5$ remain fully cleared.
Moreover, in the last turn
all the vertices in 
$R_{n-k+3}$ will also become fully cleared. 
Hence, all the vertices in column $0$ to column $n-k+1$ become fully cleared after turn $n-k+2$ of the second stage. 
Figure~\ref{fig:secondstep} demonstrates the second stage of fully clearing $T_5$.

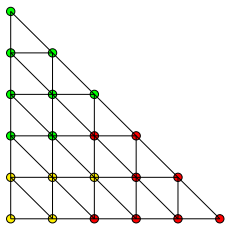
\begin{figure}[h]
	\centering
        \scalebox{0.55}{
	\begin{tikzpicture}{scale=0.1}
		\draw[fill = green] (0,5) circle(0.1cm);
		\draw[fill = green] (0,4) circle(0.1cm);
		\draw[fill = green] (1,4) circle(0.1cm);
		\draw[fill = green] (0,3) circle(0.1cm);
		\draw[fill = green] (1,3) circle(0.1cm);
            \draw[fill = green] (2,3) circle(0.1cm);
            \draw[fill = green] (0,2) circle(0.1cm);
            \draw[fill = green] (1,2) circle(0.1cm);
            \draw[fill = red] (2,2) circle(0.1cm);
            \draw[fill = red] (3,2) circle(0.1cm);
            \draw[fill = yellow] (0,1) circle(0.1cm);
		\draw[fill = yellow] (1,1) circle(0.1cm);
		\draw[fill = yellow] (2,1) circle(0.1cm);
		\draw[fill = red] (3,1) circle(0.1cm);
		\draw[fill = red] (4,1) circle(0.1cm);
            \draw[fill = yellow] (0,0) circle(0.1cm);
            \draw[fill = yellow] (1,0) circle(0.1cm);
            \draw[fill = red] (2,0) circle(0.1cm);
            \draw[fill = red] (3,0) circle(0.1cm);
            \draw[fill = red] (4,0) circle(0.1cm);
            \draw[fill = red] (5,0) circle(0.1cm);
           
            \draw (0,0) -- (5,0);
		\draw (0,0) -- (0,5);
		\draw (0,1) -- (1,0);
		\draw (0,2) -- (2,0);
            \draw (0,3) -- (3,0);
            \draw (0,4) -- (4,0);
            \draw (0,5) -- (5,0);
            \draw (0,4) -- (1,4);
            \draw (0,3) -- (2,3);
            \draw (0,2) -- (3,2);
		\draw (0,1) -- (4,1);
		\draw (1,0) -- (1,4);
		\draw (2,0) -- (2,3);
            \draw (3,0) -- (3,2);
            \draw (4,0) -- (4,1);

        \end{tikzpicture}
        }

        \caption{The second stage of fully clearing $T_5$.}
	\label{fig:secondstep}
\end{figure}

The third and final step of our search method is to fully clear the remaining uncleared columns from the second stage, i.e. columns $n-k+2$ to $n$. This third stage is essentially a reverse of the first stage. First, we will make every vertex in column 
$n-k+2$ 
fully cleared. Denote the vertices in column 
$n-k+2$ 
by 
$v_1,v_2,\dots,v_{k-1}$ 
from top to bottom.
Similarly, denote the vertices in column 
$n-k+3$ by $v_{k},v_{k+1},\dots,
v_{2k-3}$ from top to bottom.
We start the phase of fully clearing column 
$n-k+2$ by examining 
$v_1,v_2,\dots,v_{k}$. This search will make $v_1$ become fully cleared. 
In the $j$-th turn of this phase for 
$j \in \{1,2,\dots,k-2\}$, we examine $v_j,v_{j+1},\dots,v_{j+k-1}$.
Clearly, vertices $v_1,v_2,\dots,v_j$ will become fully cleared after the $j$-th search. Moreover, $v_{k-1}$ will also be fully cleared after turn $k-2$ since all the vertices in column $n-k+3$ will be examined during this turn.
Thus, all the vertices in column 
$n-k+2$ will become fully cleared after $k-2$ turns.
Once we have made every vertex in column $n-k+2$ to column $n-k+2+i$ fully cleared, we will make every vertex in column 
$n-k+3+i$ fully cleared by using the same strategy described for column $n-k+2$, but examining 
$k-1-i$ vertices instead of $k$ vertices. Evidently, this strategy can be utilized until the entire triangular grid graph has become fully cleared. Thus, $\In(T_n) \le k = \lceil\frac{3n}{4}\rceil+2$. 
Figure~\ref{fig:thirdstep} demonstrates the final stage of fully clearing $T_5$.

\begin{figure}[h]
	\centering
        \scalebox{0.55}{
	\begin{tikzpicture}
		\draw[fill = green] (0,5) circle(0.1cm);
		\draw[fill = green] (0,4) circle(0.1cm);
		\draw[fill = yellow] (1,4) circle(0.1cm);
		\draw[fill = green] (0,3) circle(0.1cm);
		\draw[fill = yellow] (1,3) circle(0.1cm);
            \draw[fill = yellow] (2,3) circle(0.1cm);
            \draw[fill = green] (0,2) circle(0.1cm);
            \draw[fill = yellow] (1,2) circle(0.1cm);
            \draw[fill = red] (2,2) circle(0.1cm);
            \draw[fill = red] (3,2) circle(0.1cm);
            \draw[fill = green] (0,1) circle(0.1cm);
		\draw[fill = yellow] (1,1) circle(0.1cm);
		\draw[fill = red] (2,1) circle(0.1cm);
		\draw[fill = red] (3,1) circle(0.1cm);
		\draw[fill = red] (4,1) circle(0.1cm);
            \draw[fill = green] (0,0) circle(0.1cm);
            \draw[fill = yellow] (1,0) circle(0.1cm);
            \draw[fill = red] (2,0) circle(0.1cm);
            \draw[fill = red] (3,0) circle(0.1cm);
            \draw[fill = red] (4,0) circle(0.1cm);
            \draw[fill = red] (5,0) circle(0.1cm);
           
            \draw (0,0) -- (5,0);
		\draw (0,0) -- (0,5);
		\draw (0,1) -- (1,0);
		\draw (0,2) -- (2,0);
            \draw (0,3) -- (3,0);
            \draw (0,4) -- (4,0);
            \draw (0,5) -- (5,0);
            \draw (0,4) -- (1,4);
            \draw (0,3) -- (2,3);
            \draw (0,2) -- (3,2);
		\draw (0,1) -- (4,1);
		\draw (1,0) -- (1,4);
		\draw (2,0) -- (2,3);
            \draw (3,0) -- (3,2);
            \draw (4,0) -- (4,1);

            \draw[fill = green] (6,5) circle(0.1cm);
		\draw[fill = green] (6,4) circle(0.1cm);
		\draw[fill = green] (7,4) circle(0.1cm);
		\draw[fill = green] (6,3) circle(0.1cm);
		\draw[fill = yellow] (7,3) circle(0.1cm);
            \draw[fill = yellow] (8,3) circle(0.1cm);
            \draw[fill = green] (6,2) circle(0.1cm);
            \draw[fill = yellow] (7,2) circle(0.1cm);
            \draw[fill = yellow] (8,2) circle(0.1cm);
            \draw[fill = red] (9,2) circle(0.1cm);
            \draw[fill = green] (6,1) circle(0.1cm);
		\draw[fill = yellow] (7,1) circle(0.1cm);
		\draw[fill = red] (8,1) circle(0.1cm);
		\draw[fill = red] (9,1) circle(0.1cm);
		\draw[fill = red] (10,1) circle(0.1cm);
            \draw[fill = green] (6,0) circle(0.1cm);
            \draw[fill = yellow] (7,0) circle(0.1cm);
            \draw[fill = red] (8,0) circle(0.1cm);
            \draw[fill = red] (9,0) circle(0.1cm);
            \draw[fill = red] (10,0) circle(0.1cm);
            \draw[fill = red] (11,0) circle(0.1cm);
  
		\draw (6,0) -- (11,0);
		\draw (6,0) -- (6,5);
		\draw (6,1) -- (7,0);
		\draw (6,2) -- (8,0);
            \draw (6,3) -- (9,0);
            \draw (6,4) -- (10,0);
            \draw (6,5) -- (11,0);
            \draw (6,4) -- (7,4);
            \draw (6,3) -- (8,3);
            \draw (6,2) -- (9,2);
		\draw (6,1) -- (10,1);
		\draw (7,0) -- (7,4);
		\draw (8,0) -- (8,3);
            \draw (9,0) -- (9,2);
            \draw (10,0) -- (10,1);

            \draw[fill = green] (12,5) circle(0.1cm);
		\draw[fill = green] (12,4) circle(0.1cm);
		\draw[fill = green] (13,4) circle(0.1cm);
		\draw[fill = green] (12,3) circle(0.1cm);
		\draw[fill = green] (13,3) circle(0.1cm);
            \draw[fill = yellow] (14,3) circle(0.1cm);
            \draw[fill = green] (12,2) circle(0.1cm);
            \draw[fill = yellow] (13,2) circle(0.1cm);
            \draw[fill = yellow] (14,2) circle(0.1cm);
            \draw[fill = red] (15,2) circle(0.1cm);
            \draw[fill = green] (12,1) circle(0.1cm);
		\draw[fill = yellow] (13,1) circle(0.1cm);
		\draw[fill = yellow] (14,1) circle(0.1cm);
		\draw[fill = red] (15,1) circle(0.1cm);
		\draw[fill = red] (16,1) circle(0.1cm);
            \draw[fill = green] (12,0) circle(0.1cm);
            \draw[fill = yellow] (13,0) circle(0.1cm);
            \draw[fill = red] (14,0) circle(0.1cm);
            \draw[fill = red] (15,0) circle(0.1cm);
            \draw[fill = red] (16,0) circle(0.1cm);
            \draw[fill = red] (17,0) circle(0.1cm);
  
		\draw (12,0) -- (17,0);
		\draw (12,0) -- (12,5);
		\draw (12,1) -- (13,0);
		\draw (12,2) -- (14,0);
            \draw (12,3) -- (15,0);
            \draw (12,4) -- (16,0);
            \draw (12,5) -- (17,0);
            \draw (12,4) -- (13,4);
            \draw (12,3) -- (14,3);
            \draw (12,2) -- (15,2);
		\draw (12,1) -- (16,1);
		\draw (13,0) -- (13,4);
		\draw (14,0) -- (14,3);
            \draw (15,0) -- (15,2);
            \draw (16,0) -- (16,1);

            \draw[fill = green] (0,-1) circle(0.1cm);
		\draw[fill = green] (0,-2) circle(0.1cm);
		\draw[fill = green] (1,-2) circle(0.1cm);
		\draw[fill = green] (0,-3) circle(0.1cm);
		\draw[fill = green] (1,-3) circle(0.1cm);
            \draw[fill = yellow] (2,-3) circle(0.1cm);
            \draw[fill = green] (0,-4) circle(0.1cm);
            \draw[fill = green] (1,-4) circle(0.1cm);
            \draw[fill = yellow] (2,-4) circle(0.1cm);
            \draw[fill = red] (3,-4) circle(0.1cm);
            \draw[fill = green] (0,-5) circle(0.1cm);
		\draw[fill = yellow] (1,-5) circle(0.1cm);
		\draw[fill = yellow] (2,-5) circle(0.1cm);
		\draw[fill = red] (3,-5) circle(0.1cm);
		\draw[fill = red] (4,-5) circle(0.1cm);
            \draw[fill = green] (0,-6) circle(0.1cm);
            \draw[fill = yellow] (1,-6) circle(0.1cm);
            \draw[fill = yellow] (2,-6) circle(0.1cm);
            \draw[fill = red] (3,-6) circle(0.1cm);
            \draw[fill = red] (4,-6) circle(0.1cm);
            \draw[fill = red] (5,-6) circle(0.1cm);
           
            \draw (0,-6) -- (5,-6);
		\draw (0,-6) -- (0,-1);
		\draw (0,-5) -- (1,-6);
		\draw (0,-4) -- (2,-6);
            \draw (0,-3) -- (3,-6);
            \draw (0,-2) -- (4,-6);
            \draw (0,-1) -- (5,-6);
            \draw (0,-2) -- (1,-2);
            \draw (0,-3) -- (2,-3);
            \draw (0,-4) -- (3,-4);
		\draw (0,-5) -- (4,-5);
		\draw (1,-6) -- (1,-2);
		\draw (2,-6) -- (2,-3);
            \draw (3,-6) -- (3,-4);
            \draw (4,-6) -- (4,-5);

            \draw[fill = green] (6,-1) circle(0.1cm);
		\draw[fill = green] (6,-2) circle(0.1cm);
		\draw[fill = green] (7,-2) circle(0.1cm);
		\draw[fill = green] (6,-3) circle(0.1cm);
		\draw[fill = green] (7,-3) circle(0.1cm);
            \draw[fill = yellow] (8,-3) circle(0.1cm);
            \draw[fill = green] (6,-4) circle(0.1cm);
            \draw[fill = green] (7,-4) circle(0.1cm);
            \draw[fill = yellow] (8,-4) circle(0.1cm);
            \draw[fill = yellow] (9,-4) circle(0.1cm);
            \draw[fill = green] (6,-5) circle(0.1cm);
		\draw[fill = green] (7,-5) circle(0.1cm);
		\draw[fill = yellow] (8,-5) circle(0.1cm);
		\draw[fill = red] (9,-5) circle(0.1cm);
		\draw[fill = red] (10,-5) circle(0.1cm);
            \draw[fill = green] (6,-6) circle(0.1cm);
            \draw[fill = green] (7,-6) circle(0.1cm);
            \draw[fill = yellow] (8,-6) circle(0.1cm);
            \draw[fill = red] (9,-6) circle(0.1cm);
            \draw[fill = red] (10,-6) circle(0.1cm);
            \draw[fill = red] (11,-6) circle(0.1cm);
           
            \draw (6,-6) -- (11,-6);
		\draw (6,-6) -- (6,-1);
		\draw (6,-5) -- (7,-6);
		\draw (6,-4) -- (8,-6);
            \draw (6,-3) -- (9,-6);
            \draw (6,-2) -- (10,-6);
            \draw (6,-1) -- (11,-6);
            \draw (6,-2) -- (7,-2);
            \draw (6,-3) -- (8,-3);
            \draw (6,-4) -- (9,-4);
		\draw (6,-5) -- (10,-5);
		\draw (7,-6) -- (7,-2);
		\draw (8,-6) -- (8,-3);
            \draw (9,-6) -- (9,-4);
            \draw (10,-6) -- (10,-5);

            \draw[fill = green] (12,-1) circle(0.1cm);
		\draw[fill = green] (12,-2) circle(0.1cm);
		\draw[fill = green] (13,-2) circle(0.1cm);
		\draw[fill = green] (12,-3) circle(0.1cm);
		\draw[fill = green] (13,-3) circle(0.1cm);
            \draw[fill = green] (14,-3) circle(0.1cm);
            \draw[fill = green] (12,-4) circle(0.1cm);
            \draw[fill = green] (13,-4) circle(0.1cm);
            \draw[fill = yellow] (14,-4) circle(0.1cm);
            \draw[fill = yellow] (15,-4) circle(0.1cm);
            \draw[fill = green] (12,-5) circle(0.1cm);
		\draw[fill = green] (13,-5) circle(0.1cm);
		\draw[fill = yellow] (14,-5) circle(0.1cm);
		\draw[fill = yellow] (15,-5) circle(0.1cm);
		\draw[fill = red] (16,-5) circle(0.1cm);
            \draw[fill = green] (12,-6) circle(0.1cm);
            \draw[fill = green] (13,-6) circle(0.1cm);
            \draw[fill = yellow] (14,-6) circle(0.1cm);
            \draw[fill = red] (15,-6) circle(0.1cm);
            \draw[fill = red] (16,-6) circle(0.1cm);
            \draw[fill = red] (17,-6) circle(0.1cm);
           
            \draw (12,-6) -- (17,-6);
		\draw (12,-6) -- (12,-1);
		\draw (12,-5) -- (13,-6);
		\draw (12,-4) -- (14,-6);
            \draw (12,-3) -- (15,-6);
            \draw (12,-2) -- (16,-6);
            \draw (12,-1) -- (17,-6);
            \draw (12,-2) -- (13,-2);
            \draw (12,-3) -- (14,-3);
            \draw (12,-4) -- (15,-4);
		\draw (12,-5) -- (16,-5);
		\draw (13,-6) -- (13,-2);
		\draw (14,-6) -- (14,-3);
            \draw (15,-6) -- (15,-4);
            \draw (16,-6) -- (16,-5);

            \draw[fill = green] (0,-7) circle(0.1cm);
		\draw[fill = green] (0,-8) circle(0.1cm);
		\draw[fill = green] (1,-8) circle(0.1cm);
		\draw[fill = green] (0,-9) circle(0.1cm);
		\draw[fill = green] (1,-9) circle(0.1cm);
            \draw[fill = green] (2,-9) circle(0.1cm);
            \draw[fill = green] (0,-10) circle(0.1cm);
            \draw[fill = green] (1,-10) circle(0.1cm);
            \draw[fill = green] (2,-10) circle(0.1cm);
            \draw[fill = yellow] (3,-10) circle(0.1cm);
            \draw[fill = green] (0,-11) circle(0.1cm);
		\draw[fill = green] (1,-11) circle(0.1cm);
		\draw[fill = yellow] (2,-11) circle(0.1cm);
		\draw[fill = yellow] (3,-11) circle(0.1cm);
		\draw[fill = red] (4,-11) circle(0.1cm);
            \draw[fill = green] (0,-12) circle(0.1cm);
            \draw[fill = green] (1,-12) circle(0.1cm);
            \draw[fill = yellow] (2,-12) circle(0.1cm);
            \draw[fill = yellow] (3,-12) circle(0.1cm);
            \draw[fill = red] (4,-12) circle(0.1cm);
            \draw[fill = red] (5,-12) circle(0.1cm);
           
            \draw (0,-12) -- (5,-12);
		\draw (0,-12) -- (0,-7);
		\draw (0,-11) -- (1,-11);
		\draw (0,-10) -- (2,-12);
            \draw (0,-9) -- (3,-12);
            \draw (0,-8) -- (4,-12);
            \draw (0,-7) -- (5,-12);
            \draw (0,-8) -- (1,-8);
            \draw (0,-9) -- (2,-9);
            \draw (0,-10) -- (3,-10);
		\draw (0,-11) -- (4,-11);
		\draw (1,-12) -- (1,-8);
		\draw (2,-12) -- (2,-9);
            \draw (3,-12) -- (3,-10);
            \draw (4,-12) -- (4,-11);

            \draw[fill = green] (6,-7) circle(0.1cm);
		\draw[fill = green] (6,-8) circle(0.1cm);
		\draw[fill = green] (7,-8) circle(0.1cm);
		\draw[fill = green] (6,-9) circle(0.1cm);
		\draw[fill = green] (7,-9) circle(0.1cm);
            \draw[fill = green] (8,-9) circle(0.1cm);
            \draw[fill = green] (6,-10) circle(0.1cm);
            \draw[fill = green] (7,-10) circle(0.1cm);
            \draw[fill = green] (8,-10) circle(0.1cm);
            \draw[fill = yellow] (9,-10) circle(0.1cm);
            \draw[fill = green] (6,-11) circle(0.1cm);
		\draw[fill = green] (7,-11) circle(0.1cm);
		\draw[fill = green] (8,-11) circle(0.1cm);
		\draw[fill = yellow] (9,-11) circle(0.1cm);
		\draw[fill = yellow] (10,-11) circle(0.1cm);
            \draw[fill = green] (6,-12) circle(0.1cm);
            \draw[fill = green] (7,-12) circle(0.1cm);
            \draw[fill = green] (8,-12) circle(0.1cm);
            \draw[fill = yellow] (9,-12) circle(0.1cm);
            \draw[fill = red] (10,-12) circle(0.1cm);
            \draw[fill = red] (11,-12) circle(0.1cm);
           
            \draw (6,-12) -- (11,-12);
		\draw (6,-12) -- (6,-7);
		\draw (6,-11) -- (7,-11);
		\draw (6,-10) -- (8,-12);
            \draw (6,-9) -- (9,-12);
            \draw (6,-8) -- (10,-12);
            \draw (6,-7) -- (11,-12);
            \draw (6,-8) -- (7,-8);
            \draw (6,-9) -- (8,-9);
            \draw (6,-10) -- (9,-10);
		\draw (6,-11) -- (10,-11);
		\draw (7,-12) -- (7,-8);
		\draw (8,-12) -- (8,-9);
            \draw (9,-12) -- (9,-10);
            \draw (10,-12) -- (10,-11);

            \draw[fill = green] (12,-7) circle(0.1cm);
		\draw[fill = green] (12,-8) circle(0.1cm);
		\draw[fill = green] (13,-8) circle(0.1cm);
		\draw[fill = green] (12,-9) circle(0.1cm);
		\draw[fill = green] (13,-9) circle(0.1cm);
            \draw[fill = green] (14,-9) circle(0.1cm);
            \draw[fill = green] (12,-10) circle(0.1cm);
            \draw[fill = green] (13,-10) circle(0.1cm);
            \draw[fill = green] (14,-10) circle(0.1cm);
            \draw[fill = green] (15,-10) circle(0.1cm);
            \draw[fill = green] (12,-11) circle(0.1cm);
		\draw[fill = green] (13,-11) circle(0.1cm);
		\draw[fill = green] (14,-11) circle(0.1cm);
		\draw[fill = yellow] (15,-11) circle(0.1cm);
		\draw[fill = yellow] (16,-11) circle(0.1cm);
            \draw[fill = green] (12,-12) circle(0.1cm);
            \draw[fill = green] (13,-12) circle(0.1cm);
            \draw[fill = green] (14,-12) circle(0.1cm);
            \draw[fill = yellow] (15,-12) circle(0.1cm);
            \draw[fill = yellow] (16,-12) circle(0.1cm);
            \draw[fill = red] (17,-12) circle(0.1cm);
           
            \draw (12,-12) -- (17,-12);
		\draw (12,-12) -- (12,-7);
		\draw (12,-11) -- (13,-11);
		\draw (12,-10) -- (14,-12);
            \draw (12,-9) -- (15,-12);
            \draw (12,-8) -- (16,-12);
            \draw (12,-7) -- (17,-12);
            \draw (12,-8) -- (13,-8);
            \draw (12,-9) -- (14,-9);
            \draw (12,-10) -- (15,-10);
		\draw (12,-11) -- (16,-11);
		\draw (13,-12) -- (13,-8);
		\draw (14,-12) -- (14,-9);
            \draw (15,-12) -- (15,-10);
            \draw (16,-12) -- (16,-11);

            \draw[fill = green] (0,-13) circle(0.1cm);
		\draw[fill = green] (0,-14) circle(0.1cm);
		\draw[fill = green] (1,-14) circle(0.1cm);
		\draw[fill = green] (0,-15) circle(0.1cm);
		\draw[fill = green] (1,-15) circle(0.1cm);
            \draw[fill = green] (2,-15) circle(0.1cm);
            \draw[fill = green] (0,-16) circle(0.1cm);
            \draw[fill = green] (1,-16) circle(0.1cm);
            \draw[fill = green] (2,-16) circle(0.1cm);
            \draw[fill = green] (3,-16) circle(0.1cm);
            \draw[fill = green] (0,-17) circle(0.1cm);
		\draw[fill = green] (1,-17) circle(0.1cm);
		\draw[fill = green] (2,-17) circle(0.1cm);
		\draw[fill = green] (3,-17) circle(0.1cm);
		\draw[fill = yellow] (4,-17) circle(0.1cm);
            \draw[fill = green] (0,-18) circle(0.1cm);
            \draw[fill = green] (1,-18) circle(0.1cm);
            \draw[fill = green] (2,-18) circle(0.1cm);
            \draw[fill = green] (3,-18) circle(0.1cm);
            \draw[fill = yellow] (4,-18) circle(0.1cm);
            \draw[fill = yellow] (5,-18) circle(0.1cm);
           
            \draw (0,-18) -- (5,-18);
		\draw (0,-18) -- (0,-13);
		\draw (0,-17) -- (1,-17);
		\draw (0,-16) -- (2,-18);
            \draw (0,-15) -- (3,-18);
            \draw (0,-14) -- (4,-18);
            \draw (0,-13) -- (5,-18);
            \draw (0,-14) -- (1,-14);
            \draw (0,-15) -- (2,-15);
            \draw (0,-16) -- (3,-16);
		\draw (0,-17) -- (4,-17);
		\draw (1,-18) -- (1,-14);
		\draw (2,-18) -- (2,-15);
            \draw (3,-18) -- (3,-16);
            \draw (4,-18) -- (4,-17);
            
        \end{tikzpicture}
        }

        \caption{The third stage of fully clearing $T_5$.}
	\label{fig:thirdstep}
\end{figure}
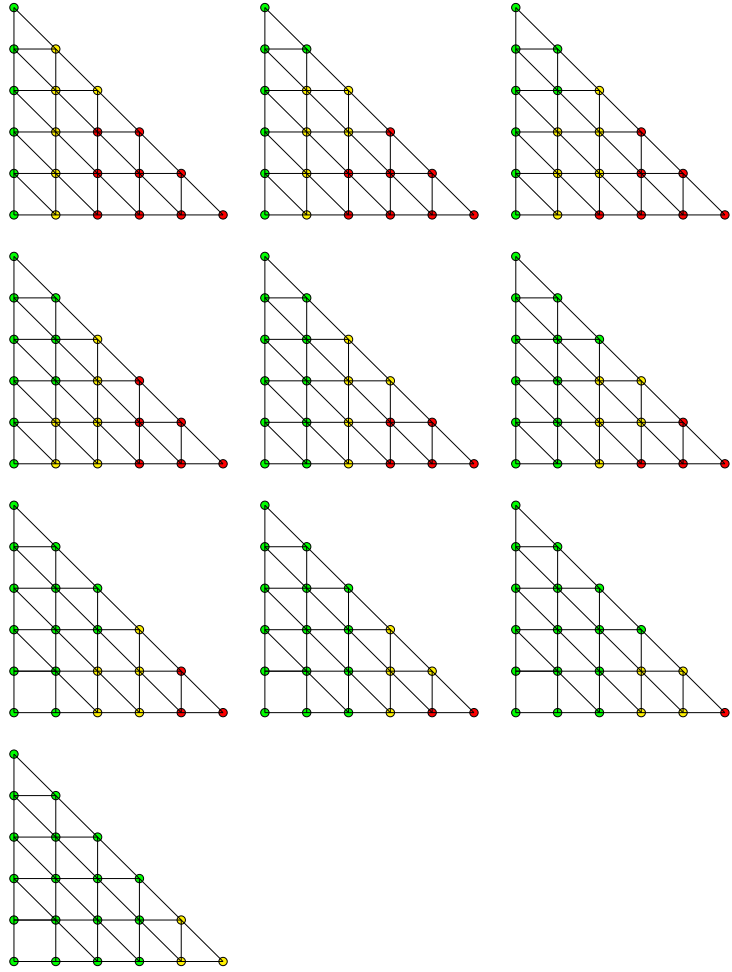

We now wish to show that 
$\In(T_n) > \frac{n}{\sqrt{2}}$. 
We will prove the lower bound by using the following lemma of Bernshteyn and Lee~\cite{Search}.

\begin{lemma}\label{prop3.1}
    Let $G$ be a connected graph. If $\In(G) \le m$, then for each $1 \le i \le |V(G)|$, there exists some $C \subseteq V(G)$ such that $i-m < |C| <i$ and 
    $|\partial{'}_{G}(C)| < m$
    where $\partial{'}_{G}(C)$ is the collection of all vertices in $C$ that share an edge with some vertex not in $C$.
\end{lemma}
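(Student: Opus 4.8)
The plan is to read off the sets $C$ from the contaminated sets produced by a winning strategy, after first replacing that strategy by a monotone one. Since the searcher receives no feedback, the hypothesis $\In(G)\le m$ gives a fixed sequence of probe sets $S_0,S_1,\dots,S_T\subseteq V(G)$ with $|S_t|\le m$; writing $A_0=V(G)$ and, after turn $t$, $A_{t+1}=(A_t\setminus S_t)\cup\partial_G(A_t\setminus S_t)$ for the set of vertices that might still contain the intruder, the strategy is winning precisely when $A_{T+1}=\emptyset$. The crucial preliminary step is that we may take this sequence to be \emph{monotone}, i.e. $V(G)=A_0\supseteq A_1\supseteq\cdots\supseteq A_{T+1}=\emptyset$: this is the analogue for the Zero-Visibility $k$-Search game of the standard fact that monotone node-searching is no weaker than non-monotone node-searching, and it is exactly the place where one uses that the intruder may stay put (cf.~\cite{Search}).

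Granting monotonicity, the rest is bookkeeping. Since $A_{t+1}\supseteq A_t\setminus S_t$ we always have $|A_t|-|A_{t+1}|\le m$, so the sizes $|V(G)|=|A_0|\ge|A_1|\ge\cdots\ge|A_{T+1}|=0$ decrease by at most $m$ at each step. Fix $i$ and let $s$ be the least index with $|A_s|<i$; then $s\ge 1$, $|A_{s-1}|\ge i$, and $i-m\le|A_s|<i$. Put $C=A_s$. To bound $\partial'_G(C)$, note that $A_s=(A_{s-1}\setminus S_{s-1})\cup\partial_G(A_{s-1}\setminus S_{s-1})$, so every vertex of $A_{s-1}\setminus S_{s-1}$ has all of its neighbours in $A_s$ and therefore lies in $A_s\setminus\partial'_G(A_s)$; using $A_s\subseteq A_{s-1}$ this yields $\partial'_G(C)\subseteq A_s\cap S_{s-1}$. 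On the other hand $A_{s-1}\setminus A_s\subseteq A_{s-1}\cap S_{s-1}$ (again because $A_s\supseteq A_{s-1}\setminus S_{s-1}$), and $A_{s-1}\setminus A_s$ is non-empty (as $A_s\subsetneq A_{s-1}$) and disjoint from $A_s\supseteq\partial'_G(C)$. As $\partial'_G(C)$ and $A_{s-1}\setminus A_s$ are thus disjoint subsets of $S_{s-1}$, we get $|\partial'_G(C)|+|A_{s-1}\setminus A_s|\le|S_{s-1}|\le m$, so $|\partial'_G(C)|\le m-1<m$; and if $|C|=|A_s|$ equalled $i-m$ then $|A_{s-1}|=i$ and $|A_{s-1}\setminus A_s|=m$, forcing $\partial'_G(C)=\emptyset$ and hence, $G$ being connected, $C\in\{\emptyset,V(G)\}$, a degenerate situation that is excluded apart from trivial small cases. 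So $i-m<|C|<i$ and $|\partial'_G(C)|<m$, as required.

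The step I expect to be the real obstacle is the monotonicity reduction in the first paragraph. It is indispensable: in a non-monotone run, recontamination can make the inner boundary of the contaminated set whose size first drops below $i$ as large as $|V(G)|-i$, which is of the same order as $|V(G)|$ in the regime where the lemma is applied, so no amount of clever choosing of $C$ among the $A_t$ would rescue the argument — one genuinely needs a winning strategy in which cleared territory is never surrendered.
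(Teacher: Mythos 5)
The paper does not prove this lemma at all---it is quoted from Bernshteyn and Lee---so your argument has to stand on its own, and it does not: the monotonicity reduction in your first paragraph is a genuine gap. You assert that any winning probe sequence may be replaced by one with $V(G)=A_0\supseteq A_1\supseteq\cdots\supseteq A_{T+1}=\emptyset$, offering only an analogy with monotone node searching. That analogy does not transfer: the classical monotonization arguments rely on searchers persistently occupying a guard set between moves, whereas here the probes are instantaneous and cleared territory is surrendered by default; for closely related zero-visibility games (zero-visibility Cops and Robber, Hunters \& Rabbit) recontamination is known to genuinely help the evader, so monotone strategies cannot simply be assumed equivalent. Since you yourself say the entire proof hinges on this step, the proof as written is incomplete.

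Moreover, your closing claim that monotonicity is ``indispensable'' is wrong, and the repair stays entirely within your own bookkeeping. Writing $A_{t+1}=N(A_t\setminus S_t)=(A_t\setminus S_t)\sqcup\partial_G(A_t\setminus S_t)$, let $s$ be the \emph{largest} index with $|A_s|\ge i$ (it exists since $|A_0|=|V(G)|\ge i$ and $|A_{T+1}|=0$) and set $C=A_{s+1}$. Then $|C|\ge|A_s|-|A_s\cap S_s|\ge i-m$ and $|C|<i$ by the choice of $s$. Every vertex of $A_s\setminus S_s$ has all of its neighbours inside $N(A_s\setminus S_s)=C$, so $\partial'_{G}(C)\subseteq\partial_G(A_s\setminus S_s)$; and the disjoint union gives $|\partial_G(A_s\setminus S_s)|=|C|-|A_s|+|A_s\cap S_s|\le -1+m<m$, because $|C|<i\le|A_s|$. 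This is precisely what your intuition about recontamination misses: if the newly contaminated set $\partial_G(A_s\setminus S_s)\setminus A_s$ were large at this step, then $|A_{s+1}|$ could not drop below $i$. The same identity upgrades $|C|\ge i-m$ to $|C|>i-m$ unless $\partial_G(A_s\setminus S_s)=\emptyset$, which by connectedness forces $A_s\setminus S_s\in\{\emptyset,V(G)\}$ and reduces to the same degenerate case you already flag. So the lemma follows with no monotonization at all, and you should replace the first paragraph by this crossing-step count.
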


We will adapt Lemma~\ref{prop3.1} into a new lemma in terms of the exterior boundary $\partial_{G}(D)$ instead of the interior boundary $\partial{'}_{G}(C)$. 

\begin{lemma}\label{adaptprop3.1}
    If $\In(G) \le m$, then for each $0 \le i \le |V(G)|-1$, there exists some 
    $D \subseteq V(G)$ such that $i < |D| < i+m$ and 
    $|\partial_{G}(D)| < m$.
\end{lemma}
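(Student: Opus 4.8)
The plan is to deduce Lemma~\ref{adaptprop3.1} directly from Lemma~\ref{prop3.1} by passing to set complements. The observation that makes this work is a duality between the two notions of boundary: for any $C \subseteq V(G)$, setting $D = V(G) \setminus C$ gives $\partial'_G(C) = \partial_G(D)$. Indeed, $v \in \partial'_G(C)$ means that $v \in C$ and $v$ has a neighbour outside $C$, i.e.\ $v \notin D$ and $v$ has a neighbour in $D$, which is precisely the statement $v \in \partial_G(D)$. In particular $|\partial_G(D)| = |\partial'_G(C)|$.

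With this in hand, suppose $\In(G) \le m$ and fix $i$ with $0 \le i \le |V(G)|-1$. I would apply Lemma~\ref{prop3.1} to the index $|V(G)| - i$, which is permissible since $1 \le |V(G)| - i \le |V(G)|$. This yields a set $C \subseteq V(G)$ with
\[
(|V(G)| - i) - m < |C| < |V(G)| - i \qquad\text{and}\qquad |\partial'_G(C)| < m .
\]
Put $D = V(G) \setminus C$. Then $|D| = |V(G)| - |C|$, so the displayed size bounds transform into $i < |D| < i + m$, and by the duality above $|\partial_G(D)| = |\partial'_G(C)| < m$. Hence $D$ has the required properties, which completes the argument.

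I do not expect any genuine obstacle here: the whole content sits in Lemma~\ref{prop3.1} of Bernshteyn and Lee, and Lemma~\ref{adaptprop3.1} is only a repackaging that trades the interior boundary for the exterior boundary so as to mesh with the formulation of Theorem~\ref{thm:inequality} (which is phrased through neighbourhoods $N(A) = A \cup \partial_{T_n}(A)$). The one point requiring care is the index translation $i \mapsto |V(G)| - i$ and verifying that the strict inequalities carry over correctly, which they do.
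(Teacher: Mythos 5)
Your proof is correct and follows exactly the same route as the paper: substitute $|V(G)|-i$ into Lemma~\ref{prop3.1}, take the complement $D = V(G)\setminus C$, and use the identity $\partial'_G(C) = \partial_G(D)$. If anything, you are slightly more careful than the paper (which leaves the boundary duality implicit and has a stray $k$ where $m$ is meant), so there is nothing to change.
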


\begin{proof}
Substituting $i$ with $|V(G)|-i$ in Lemma~\ref{prop3.1}, there exists $C \subseteq V(G)$ such that $|\partial{'}_{G}(C)|<m$ and $|V(G)|-i-m < |C| < |V(G)|-i$. Let $D = V(G) \setminus C$. 
Then, 
$|\partial_{G}(D)| = |\partial{'}_{G}(C)| < k$
and $i < |D| < i+k$ as desired.
\end{proof}

To show that $\In(T_n) > m$ where $m = \lfloor\frac{n}{\sqrt{2}}\rfloor$, it suffices by Lemma~\ref{adaptprop3.1} to show that there exists an $i$ such that every $D \subseteq V(T_n)$ with $i < |D| < i+m$ must satisfy $|\partial_{T_n}(D)| \ge m$. 

We shall show that $i = (m+1)+(m+2)+\dots+(n+1)$ works. Since $|D| < i+m = m+(m+1)+\dots(n+1)$, we conclude that the final segment of the simplicial ordering of size $|D|$ has boundary size at least $m$ by Lemma~\ref{initialandfinal}. Next, we will prove that the initial segment of the simplicial ordering of size $|D|$ has boundary size at least $m$. 
By Lemma~\ref{initialandfinal}, it is sufficient to show that $1+2+\dots+(m-1) < |D|$. 
From $|D| > i = (m+1)+(m+2)+\dots+(n+1)$, it suffices to show that 
\[(m+1)+(m+2)+\dots+(n+1) \ge 
1+2+\dots+(m-1).\]
This is equivalent to $(n+1)(n+2) \ge 2m^2$ which holds for $m = \lfloor\frac{n}{\sqrt{2}}\rfloor$.
Since both the initial and final segments of the simplicial ordering of size $|D|$ have boundary size at least $m$, we use Theorem~\ref{thm:inequality} to conclude that $|\partial_{T_n}(D)| \ge m$. Therefore, $\frac{n}{\sqrt{2}} < \In(T_n) \le \lceil\frac{3n}{4}\rceil +2$. 
\qedhere


\end{proof}

\section{Proof of Theorem~\ref{thm:lion}}
\label{sec:thm:lion}

We study the Lions and Contamination game on triangular grid graphs. This section is dedicated to proving Theorem~\ref{thm:lion}.

\begin{proof}[Proof of Theorem~\ref{thm:lion}]

First, we will give a proof for the upper bound. Note that Adams, Gibson, and Pfaffinger~\cite{Lions} have applied similar ideas to prove an upper bound for another similar graph. We will show that $l(T_n) \le n+1$ by demonstrating a sequence of movements with $n+1$ lions that clears the graph of contamination. Let the starting positions of all the lions be the $n+1$ vertices on column $0$ of the triangular grid graph. We will start off the lion movement by moving the first lion from $(0,0)$ to $(1,0)$. In the next turn, we shall move the second lion from $(0,1)$ to $(1,1)$. We will continue in this manner until the lion residing in $(0,n-1)$ moves to $(1,n-1)$, but we will leave the lion on vertex $(0,n)$ at the same vertex. Since the contamination cannot spread through an edge that the lions use to travel during a certain turn, we get that this sequence of moves will ensure that column $0$ and column $1$ are clear of contamination. Moreover, the previous movements also lead to all the vertices in column $1$ having a lion. We shall move the lions from column $1$ to column $2$ in a similar manner to the movement from column $0$ to column $1$. This will ensure that column $2$ becomes clear of contamination, while column $0$ and column $1$ also remain clear of contamination. We can continue this movement until all the columns of the triangular grid graph have become clear of contamination. Thus, $n+1$ lions are sufficient for clearing the triangular grid graph of contamination. Figure~\ref{fig:lionmovement} demonstrates the strategy mentioned above for $T_2$.

\begin{figure}\label{lionmove}
	\centering
	\begin{tikzpicture}
		\draw[fill = green] (0,0) circle(0.1cm);
		\draw[fill = green] (0,1) circle(0.1cm);
		\draw[fill = green] (0,2) circle(0.1cm);
		\draw[fill = red] (1,0) circle(0.1cm);
		\draw[fill = red] (1,1) circle(0.1cm);
		\draw[fill = red] (2,0) circle(0.1cm);
		
		\draw (0,0) -- (2,0);
		\draw (0,0) -- (0,2);
		\draw (1,0) -- (1,1);
		\draw (0,1) -- (1,1);
            \draw (0,1) -- (1,0);
            \draw (0,2) -- (2,0);

            \node at (-0.3,2) {L};
            \node at (-0.3,1) {L};
            \node at (-0.3,0) {L};

            \draw[fill = green] (3,0) circle(0.1cm);
		\draw[fill = green] (3,1) circle(0.1cm);
		\draw[fill = green] (3,2) circle(0.1cm);
		\draw[fill = green] (4,0) circle(0.1cm);
		\draw[fill = red] (4,1) circle(0.1cm);
		\draw[fill = red] (5,0) circle(0.1cm);
		
		\draw (3,0) -- (5,0);
		\draw (3,0) -- (3,2);
		\draw (4,0) -- (4,1);
		\draw (3,1) -- (4,1);
            \draw (3,1) -- (4,0);
            \draw (3,2) -- (5,0);

            \node at (2.7,2) {L};
            \node at (2.7,1) {L};
            \node at (4,-0.3) {L};

            \draw[fill = green] (6,0) circle(0.1cm);
		\draw[fill = green] (6,1) circle(0.1cm);
		\draw[fill = green] (6,2) circle(0.1cm);
		\draw[fill = green] (7,0) circle(0.1cm);
		\draw[fill = green] (7,1) circle(0.1cm);
		\draw[fill = red] (8,0) circle(0.1cm);
		
		\draw (6,0) -- (8,0);
		\draw (6,0) -- (6,2);
		\draw (7,0) -- (7,1);
		\draw (6,1) -- (7,1);
            \draw (6,1) -- (7,0);
            \draw (6,2) -- (8,0);

            \node at (5.7,2) {L};
            \node at (7,1.3) {L};
            \node at (7,-0.3) {L};

            \draw[fill = green] (9,0) circle(0.1cm);
		\draw[fill = green] (9,1) circle(0.1cm);
		\draw[fill = green] (9,2) circle(0.1cm);
		\draw[fill = green] (10,0) circle(0.1cm);
		\draw[fill = green] (10,1) circle(0.1cm);
		\draw[fill = green] (11,0) circle(0.1cm);
		
		\draw (9,0) -- (11,0);
		\draw (9,0) -- (9,2);
		\draw (10,0) -- (10,1);
		\draw (9,1) -- (10,1);
            \draw (9,1) -- (10,0);
            \draw (9,2) -- (11,0);

            \node at (8.7,2) {L};
            \node at (10,1.3) {L};
            \node at (11,-0.3) {L};
          
        \end{tikzpicture}

        \caption{Lion movement on $T_2$ that clears the graph of contamination. The red vertices are contaminated. The green vertices are cleared. The positions of the lions are marked by L}
	\label{fig:lionmovement}
\end{figure}
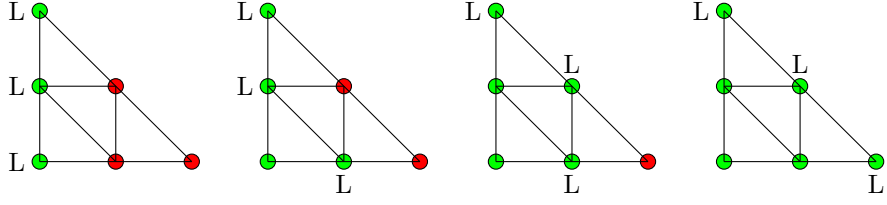

Next, we shall prove that $L(T_n) > \frac{n}{2\sqrt{2}}$. Since $\In(T_n) > \frac{n}{\sqrt{2}}$ by Theorem ~\ref{thm:search}, it is sufficient to show that $L(T_n) \ge \frac{\In({T_n})}{2}$. 



Consider the sequence of lion movements in the Lions and Contamination game that clears the triangular grid graph of contamination using $L(T_n)$ lions. Let $P(k)$ represent the set of positions of the lions after turn $k$ where $P(0)$ is the set of the initial positions of the lions. 
We desire to show that in the Zero-Visibility search game, if the searcher examines the vertices in $P(k-1) \cup P(k)$ in each turn the intruder will surely be caught. 
This implies that $\In({T_n}) \le 2L(T_n)$. Let us define $C_k$ as the set of vertices on the triangular grid graph that are clear of contamination after the lions move from $P(k-1)$ to $P(k)$. Similarly, define $I_k$ as the set of vertices on the triangular grid graph that have no possibility of containing the intruder after the searcher searches $P(k-1) \cup P(k)$. Next, we will prove the following claim

\begin{claim*}\label{claim}
$C_k \subseteq I_k$ for all $k$.
\end{claim*}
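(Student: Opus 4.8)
The natural approach is induction on $k$. At the base case $k=0$, we have $P(0)$ being the lions' initial positions and $P(-1)\cup P(0)$ should be interpreted as $P(0)$ (or we set $P(-1)=P(0)$); the set $C_0$ of clear vertices is exactly the set of lion-occupied vertices (everything else starts contaminated), and after the searcher examines $P(0)$ those same vertices cannot contain the intruder, so $C_0 \subseteq I_0$. For the inductive step I assume $C_{k-1}\subseteq I_{k-1}$ and want $C_k\subseteq I_k$.

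The heart of the argument is a contrapositive comparison of how contamination spreads versus how intruder-possibility spreads. Suppose $v\in C_k$; I must show $v\in I_k$, i.e. the intruder cannot be at $v$ after the searcher examines $P(k-1)\cup P(k)$. A vertex $v$ is clear after turn $k$ precisely when: (a) $v\in P(k)$ (a lion sits on it), or (b) $v$ was clear after turn $k-1$, i.e. $v\in C_{k-1}$, \emph{and} every edge $uv$ along which contamination could flow into $v$ is ``blocked'' — either $u\in C_{k-1}$ as well, or the edge $uv$ is traversed by a lion during turn $k$ (moving from $u$'s side or $v$'s side). In case (a), since $v\in P(k)\subseteq P(k-1)\cup P(k)$, the searcher examines $v$ on turn $k$, so $v\in I_k$ trivially. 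In case (b), I want to conclude the same for the searcher: the intruder, if it were at $v$ after turn $k$, either was already at $v$ after turn $k-1$ (impossible by $v\in C_{k-1}\subseteq I_{k-1}$), or moved into $v$ from some neighbour $u$ during turn $k$. In the latter case $u\notin I_{k-1}$ would be needed, so $u\notin C_{k-1}$ by the inductive hypothesis; but then by clause (b) the edge $uv$ must be lion-traversed on turn $k$, meaning a lion is on $u$ or $v$ at time $k-1$ or at time $k$, so $u$ or $v$ lies in $P(k-1)\cup P(k)$ and is examined — catching the intruder whether it started the turn at $u$ or ended at $v$. Hence no consistent intruder location at $v$ survives, i.e. $v\in I_k$.

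The main obstacle is getting the bookkeeping of the simultaneous-move dynamics exactly right: in the Lions and Contamination game lions and contamination move simultaneously and a vertex a lion \emph{leaves} can be recontaminated through a different edge in the same turn, while in the Zero-Visibility game the searcher moves and then the intruder moves (or vice versa), and the intruder need not move at all. I need to be careful that the set $P(k-1)\cup P(k)$ of examined vertices really does dominate every edge a lion uses during turn $k$ (it does: such an edge has one endpoint occupied at time $k-1$ and possibly the other at time $k$), and that ``the intruder does not have to move'' is handled by the $v\in C_{k-1}\subseteq I_{k-1}$ clause. I also need to confirm the intended reading of $I_k$: a vertex is in $I_k$ if, after the searcher performs \emph{all} searches up to and including examining $P(k-1)\cup P(k)$ on turn $k$, no valid intruder trajectory ends there — so the induction is genuinely over the whole search schedule, not a single step. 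Once these conventions are pinned down, the edge-by-edge case analysis above closes the induction, and since the lions eventually clear everything ($C_N = V(T_n)$ for some $N$), we get $I_N = V(T_n)$, i.e. the searcher's strategy wins with $2L(T_n)$ inspections per turn, giving $\In(T_n)\le 2L(T_n)$ as desired.
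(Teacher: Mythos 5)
Your proposal is correct and follows essentially the same route as the paper: induction on $k$, with the key observation that any edge a lion traverses during turn $k$ has both endpoints in $P(k-1)\cup P(k)$, so for an unexamined vertex $v\in C_k$ the entire closed neighbourhood of $v$ must lie in $C_{k-1}\subseteq I_{k-1}$, forcing $v\in I_k$. The paper merely compresses your edge-by-edge case (b) into the single remark that a traversed edge incident to $v$ would already place $v$ in $P(k-1)\cup P(k)$; otherwise the arguments coincide.
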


\begin{proof}
    We prove this claim using induction on $k$. Clearly, for $k = 0$ the claim is true since $C_0 = I_0 = P(0)$. Let 
    $k \ge 1$ be such that $C_{k-1} \subseteq I_{k-1}$. We shall prove that $C_k \subseteq I_k$.
    
    Let $x \in C_k$. Clearly, if 
    $x \in P(k-1) \cup P(k)$ then 
    $x \in I_k$. Suppose 
    $x \notin P(k-1) \cup P(k)$.
    By the definition of $C_k$, every neighbor of $x$ must be in $C_{k-1}$, and hence in 
    $I_{k-1}$ since 
    $C_{k-1} \subseteq I_{k-1}$. Therefore, $x \in I_k$ by definition of $I_k$.
    \qedhere
\end{proof}

Since the entire triangular grid graph will eventually be clear of contamination, we conclude that the intruder will eventually be caught by using the claim.
Moreover, 
$|P(k-1) \cup P(k)| \le |P(k-1)|+|P(k)| = 2L(T_n)$ 
for all $k$. 
Thus, $\In(T_n) \le 2L(T_n)$. This leads to the conclusion 
$\frac{n}{2\sqrt{2}} \le L(T_n) \le n+1$.
\qedhere
\end{proof}

\section{Concluding Remarks}
\label{sec:conclusion}

We would like to conclude the paper by proposing three problems that are related to the results of this paper. Bollob\'as and Leader~\cite{Compression} prove an isoperimetric inequality for rectangular grid graphs by introducing the compression technique. In Theorem~\ref{thm:inequality}, we prove an isoperimetric inequality for triangular grid graphs. We believe it would be interesting to know an isoperimetric inequality for other grid graphs. One might start with the next natural grid graph which is the hexagonal grid graph.

\begin{problem}\label{problem1}
Determine an isoperimetric inequality for any hexagonal grid graph.
\end{problem}

We consider the Zero Visibility $k$-Search Game and the Lions and Contamination Game on triangular grid graphs. We provide bounds for the inspection number and lion number, but we have not been able to determine a precise value for either function. Hence, the other two problems come as a natural consequence from our paper.

\begin{problem}
    For any triangular grid graph $T_n$, determine the value of $\In(T_n)$.
\end{problem}
   
\begin{problem}
     For any triangular grid graph $T_n$, determine the value of $L(T_n)$.
\end{problem}

\bibliographystyle{siam}
\bibliography{main}

\end{document}